\newcommand{\R}{{\mathbb R}} 
\def\R{\mathbb R}
\newcommand{\E}{{\mathbb E}}
\newcommand{\Prob}{{\mathbb P}}
\newcommand{\eps}{\varepsilon}
\newcommand{\la}{\langle}
\newcommand{\ra}{\rangle}
\def\E{\mathbb E}
\newcommand{\norm}[1]{\left\lVert#1\right\rVert_2}
\newcommand{\leqarg}[1]{\ensuremath{\stackrel{\text{#1}}{\leq}}}
\newcommand{\eqarg}[1]{\ensuremath{\stackrel{\text{#1}}{=}}}
\newcommand*\circled[1]{\tikz[baseline=(char.base)]{\node[shape=circle,draw,inner sep=2pt] (char) {#1};}}
\newtheorem{theorem}{Theorem}
\newtheorem{lemma}{Lemma}
\def\pd#1{{#1}} 
\def\dd#1{{#1}} 
\def\at#1{{#1}} 
\def\att#1{{#1}}
\title{Adaptive Gradient Descent for Convex and Non-Convex Stochastic Optimization}
\author{%
  Darina Dvinskikh\\
  Weierstrass Institute for Applied Analysis and Stochastics,\\
  Institute for Information Transmission Problems RAS\\
  \texttt{darina.dvinskikh@wias-berlin.de} \\
  \And
  Aleksandr Ogaltsov\\
  Higher school of economics \\
  \texttt{ogalcov-aleksand@mail.ru} \\
  \AND
  Alexander Gasnikov\\
  Institute for Information Transmission Problems RAS,\\
    Higher school of economics, \\
  Moscow Institute of Physics and Technology\\
  \texttt{gasnikov@yandex.ru} \\
  \And
  Pavel Dvurechensky\\
   Weierstrass Institute for Applied Analysis and Stochastics,\\
  Institute for Information Transmission Problems RAS\\
  \texttt{pavel.dvurechensky@wias-berlin.de} \\
  \And
  Alexander Tyurin\\
  Higher school of economics \\
   \texttt{alexandertiurin@gmail.com}
  \And
  Vladimir Spokoiny\\
  Weierstrass Institute for Applied Analysis and Stochastics\\
  \texttt{spokoiny@wias-berlin.de} \\
}
\begin{document}

\maketitle

\begin{abstract}
In this paper we propose several adaptive gradient methods for stochastic optimization. Unlike AdaGrad-type of methods, our algorithms are based on Armijo-type line search and they simultaneously adapt to the unknown Lipschitz constant of the gradient and variance of the stochastic approximation for the gradient. We consider an accelerated and non-accelerated gradient descent for convex problems and gradient descent for non-convex problems. In the experiments we demonstrate superiority of our methods to existing adaptive methods, e.g. AdaGrad and Adam.
\end{abstract}

\section{Introduction}
In this paper we consider unconstrained minimization problem
\begin{equation}
    \label{eq:pr_st}
    \min_{x \in \R^n} f(x), 
\end{equation}
where $f(x)$ is a smooth, possibly non-convex function with $L$-Lipschitz continuous gradient. 
We say that a function $f: E \to \R$ has  $L$-Lipschitz continuous gradient  if it is continuously differentiable and its gradient satisfies
\begin{equation*}
f(y) \leq f(x) + \la \nabla f(x) , y-x \ra + \frac{L}{2} \|x-y\|^2_2, \quad \forall ~x,y \in E.
\end{equation*}
We assume that the access to the objective $f$ is given through stochastic oracle $\nabla f(x,\xi)$, where $\xi$ is a random variable. The main assumptions on the stochastic oracle are standard for stochastic approximation literature \cite{nemirovski2009robust}
\begin{equation}
     \label{eq:stoch_or_asmpt2}
    \E \nabla f(x,\xi) = \nabla f(x), \;\; \E \left(\|\nabla f(x,\xi)-\nabla f(x)\|_2^2 \right) \leq D.
\end{equation}
One of the cornerstone questions for optimization methods is the choice of the stepsize, which has a dramatic impact on the convergence of the algorithm and the quality of the output, as, e.g., in deep learning, where it is called learning rate. Standard choice of the stepsize for the gradient descent in deterministic optimization is $1/L$ \cite{nesterov2018lectures} and it is possible to use the (accelerated) gradient descent without knowing this constant \cite{nesterov2013gradient,malitsky2018first-order,dvurechensky2018computational} in convex case and gradient method  \cite{bogolubsky2016learning} 
in non-convex case using an Armijo-type line search and checking whether the quadratic upper bound based on the $L$-smoothness is correct. Another option to adapt to the unknown smoothness is to use small-dimensional relaxation \cite{nemirovski1982orth,nesterov2018primal-dual}
. So far there is only partial understanding of how to generalize these ideas for stochastic optimization. Heuristic adaptive algorithms for smooth strongly convex optimization is proposed in \cite{byrd2012sample,friedlander2012hybrid} (see also review \cite{newton2018stochastic}). Theoretical analysis in these papers is made for the idealised versions of their algorithms which either are not practical or not adaptive. 
Authors of  work \cite{iusem2019variance} propose and theoretically analyse a method for stochastic monotone variational inequalities.

Another way to construct an adaptive stepsize comes from non-smooth optimization \cite{polyak1987introduction}, where it is suggested to take it as $1/\|\nabla f(x)\|_2$. This idea turned out to be very productive and allowed to introduce stochastic adaptive methods \cite{duchi2011adaptive,adam,deng2018optimal}
, among which usually the Adam algorithm is a method of choice \cite{ruder2016overview}. Recently this idea was generalized to propose adaptive methods for 
smooth stochastic convex optimization, yet with acceleration only for non-stochastic optimization in \cite{levy2018online}; for 
smooth stochastic monotone variational inequalities and convex optimization problems in \cite{bach2019universal,kavis2019unixgrad}; for smooth non-convex stochastic optimization \cite{ward19adagrad}. 
One of the main drawbacks in these methods is that they are not flexible to mini-batching approach, which is widespread in machine learning. The problem is that to choose the optimal mini-batch size (see also \cite{gazagnadou2019optimal}) for all these methods, one needs to know all the parameters like $D$, $L$ and the adaptivity vanishes. Moreover, these methods usually either need some additional information about the problem, e.g., distance to the solution of the problem, which may not be known for a particular problem, or have a set of hyperparameters, the best values for which are not readily available even in the non-adaptive setting. Finally, the stepsize in these methods is decreasing and in the best case asymptotically converges to a constant stepsize of the order $1/L$. This means that the methods could not adapt to the local curvature of the objective function and converge faster in the areas where the function is smoother. We summarize available literature in the Table above.

In this paper we follow an alternative line, trying to extend the idea of Armijo-type line search for the adaptive methods for convex and non-convex stochastic optimization. Surprisingly, the adaptation is needed not to each parameter separately, but to the ratio $D/L$, which can be considered as signal to noise ratio or an effective Lipschitz constant of the gradient in this case. We propose an accelerated and non-accelerated gradient descent for stochastic convex optimization and a gradient method for stochastic non-convex optimization. Our methods are flexible enough to use optimal choice of mini-batch size without additional information on the problem. Moreover, our procedure allows an increase of the stepsize, which accelerated the methods in the areas where the Lipschitz constant is small. Also, as opposed to the existing methods, our algorithms do not need to know neither the distance  to the solution, nor a set of complicated hyperparameters, which are usually fine-tuned by multiple repetition of minimization process. Moreover, since our methods are based on inexact oracle model (see e.g., \cite{devolder2014first,gasnikov2016stochasticInter}), they are adaptive not only for a stochastic error, but also for deterministic, e.g., non-smoothness of the problem. This means that our methods are universal for smooth and non-smooth optimization \cite{nesterov2015universal,yurtsever2015universal}. 
Finally, we demonstrate in the experiments that our methods work faster than state-of-the-art methods \cite{duchi2011adaptive,adam}.
{\footnotesize
\begin{table}[t]
\setlength{\tabcolsep}{3pt}
\begin{center}
\begin{small}
\begin{sc}
\begin{tabular}{lcccccc}
\toprule
Paper & N-C\footnotemark
& N-Ac & Ac & Prf & Btch & Par \\
\midrule
Duchi et al.'11 & $\times$ & $\surd$ & $\times$ & $\surd$ & $\times$ & $\times$ \\
Byrd et al.'12 & $\times$ & $\surd$ & $\times$ & $\times$ & $\times$ & $\times$ \\
Friedlander et al.'12 & $\times$ & $\surd$ & $\times$ & $\times$ & $\times$ & $\times$ \\
Kingma \& Ba'15 & $\times$ & $\surd$ & $\surd$ & $\times$ & $\times$ & $\times$ \\
Iusem et al.'19 & $\times$ & $\surd$ & $\times$ & $\surd$ & $\times$ & $\surd$\\
Levy et al.'18   & $\times$ & $\surd$ & $\times$ & $\surd$ & $\times$ & $\times$\\
Deng et al.'18   & $\times$ & $\times$ & $\surd$ & $\times$ & $\times$ & $\times$\\
Ward et al.'19        & $\surd$ & $\times$ & $\times$ & $\surd$ & $\times$ & $\surd$\\
Bach \& Levy'19   & $\times$ & $\surd$ & $\times$ & $\surd$ & $\times$ & $\times$\\
\textbf{This paper}  & $\surd$  & $\surd$ & $\surd$  & $\surd$ & $\surd$ & $\surd$ \\
\bottomrule
\end{tabular}
\end{sc}
\end{small}
\end{center}
\end{table}
\footnotetext{N-C stands for availability of an algorithm for non-convex optimization, N-Ac for a non-accelerated algorithm for convex optimization, Ac for accelerated algorithm for convex optimization, Prf for proof of the convergence rate, Btch for possibility to adaptively choose batch size without knowing other parameters, Par for non-necessity to know or tune hyperparameters like distance to the solution for choosing the stepsize.}
}

The paper is structured as follows. In Sect. \ref{Sect:conv} we present two stochastic algorithms based on stochastic gradient method to solve optimization problem of type \eqref{eq:pr_st} with convex objective function. The first algorithm is accompanied by the complexity bounds on  total number of iterations and oracle calls for the approximated stochastic gradients. The second algorithm is fully-adaptive and does not require the knowledge of Lipschitz constant for the gradient of the objective and the variance of its stochastic approximation.
Sect. \ref{Sect:non_conv} renews Sect. \ref{Sect:conv} for non-convex objective function. Finally, in Sect. \ref{Sect:exper} we show numerical experiments supporting the theory in above sections.

\section{Stochastic convex optimization}\label{Sect:conv}


In this  section we solve problem \eqref{eq:pr_st} for convex objective. 
Assuming the Lipschitz constant for the continuity of the objective gradient to be known we prove the complexity bounds for the proposed algorithm. Then we refuse this assumption and provide complexity bounds for the adaptive version of the algorithm which does not need the information about Lipschitz constant.


\subsection{Non-adaptive algorithm}
Firstly, we consider stochastic gradient descent with general stepsize $h$ 
\begin{equation}\label{eq:grad_meth}
    x^{k+1} = x^k - h\nabla^{r} f(x^k, \{\xi^{k+1}_l\}_{l=1}^{r}).
\end{equation}
where $ \nabla^{r} f(x,\{\xi_l\}_{l=1}^{r})$ is  stochastic approximation for the gradient $\nabla f(x)$ with mini-batch of size $r$ 
\begin{equation}\label{eq_minibatch}
    \nabla^r f(x,\{\xi_l\}_{l=1}^r) = \frac{1}{r}\sum_{l=1}^r  \nabla f(x,\xi_l).
\end{equation}
Here each stochastic gradient $\nabla f(x,\xi_l)$
satisfies \eqref{eq:stoch_or_asmpt2}. 

We start with the Lemma characterizing the decrease of the objective on one step of the algorithm \eqref{eq:grad_meth}.
\begin{lemma}\label{Lm_step_size}
For the stochastic gradient descent \eqref{eq:grad_meth} with step size $h= \frac{1}{2L}$ the following holds
 \begin{align}\label{eq:function_decreas0}
   f(x^{k+1}) &- f(x^{k})  \leq  -\frac{1}{4L}\|\nabla^r f(x^k, \{\xi^{k+1}_l\}_{l=1}^r)\|^2_2 + \frac{1}{2L}\|\nabla^{r} f(x^k,\{\xi^{k+1}_l\}_{l=1}^{r})-\nabla f(x^k)\|_2^2.
   \end{align}
\end{lemma}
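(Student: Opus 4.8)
The plan is to derive \eqref{eq:function_decreas0} purely from the $L$-smoothness inequality applied along the single step, with no use of expectation---the claim is a deterministic, sample-path-wise per-iteration bound. Abbreviate $g := \nabla^r f(x^k,\{\xi^{k+1}_l\}_{l=1}^r)$ and $\nabla f := \nabla f(x^k)$, so that the update \eqref{eq:grad_meth} reads $x^{k+1}-x^k = -h g$ with $h = \tfrac{1}{2L}$. Applying the defining inequality of an $L$-Lipschitz gradient with $y = x^{k+1}$ and $x = x^k$, and using $\|x^{k+1}-x^k\|_2^2 = h^2\norm{g}^2$, gives
\begin{equation*}
f(x^{k+1}) - f(x^k) \leq -h\la \nabla f, g\ra + \tfrac{Lh^2}{2}\norm{g}^2 .
\end{equation*}

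The key algebraic step is to rewrite the inner product by splitting $\nabla f = g + (\nabla f - g)$, so that $\la \nabla f, g\ra = \norm{g}^2 + \la \nabla f - g, g\ra$; this turns the bound into $-h\norm{g}^2 + h\la g-\nabla f,\, g\ra + \tfrac{Lh^2}{2}\norm{g}^2$. I would then control the cross term with Young's inequality $\la a,b\ra \leq \tfrac{\alpha}{2}\norm{a}^2 + \tfrac{1}{2\alpha}\norm{b}^2$ applied to $a = g-\nabla f$ and $b = g$. The only point that requires care---the mild ``obstacle''---is the choice of the free parameter $\alpha$: it must be tuned so that, after substituting $h=\tfrac{1}{2L}$, the residual coefficient of $\norm{g}^2$ equals $-\tfrac{1}{4L}$ while the coefficient of $\norm{g-\nabla f}^2$ equals $\tfrac{1}{2L}$.

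A short computation shows $\alpha = 2$ does the job. With $h = \tfrac{1}{2L}$ one has $\tfrac{Lh^2}{2} = \tfrac{1}{8L}$, and Young's inequality contributes $\tfrac{\alpha}{2}h = \tfrac{1}{2L}$ in front of $\norm{g-\nabla f}^2$ and $\tfrac{1}{2\alpha}h = \tfrac{1}{8L}$ in front of $\norm{g}^2$; collecting the three $\norm{g}^2$ contributions, $-\tfrac{1}{2L} + \tfrac{1}{8L} + \tfrac{1}{8L} = -\tfrac{1}{4L}$, yields exactly \eqref{eq:function_decreas0}. I would present this as a single chain of inequalities. It is worth emphasizing that the argument is entirely deterministic, so the assumption \eqref{eq:stoch_or_asmpt2} is not invoked here; it enters only later, when \eqref{eq:function_decreas0} is taken in expectation and the last term is bounded through the variance $D$ attenuated by the batch size, i.e. $D/r$.
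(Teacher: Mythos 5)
Your proof is correct and is essentially the paper's own argument: both start from the $L$-smoothness inequality, split $\nabla f(x^k)$ into the stochastic gradient plus the error term, and control the cross term with Young's inequality whose weights (your $\alpha=2$, the paper's pairing $\frac{1}{2L}\|\cdot\|_2^2 + \frac{L}{2}\|\cdot\|_2^2$ applied to $x^{k+1}-x^k = -\frac{1}{2L}g$) are identical after substitution. The only cosmetic difference is that you plug in the update $x^{k+1}-x^k=-hg$ before applying Young's inequality rather than after, and, like the paper, you correctly note the bound is deterministic with \eqref{eq:stoch_or_asmpt2} entering only later in expectation.
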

\textit{Proof.}
From Lipschitz continuity of $\nabla f(x)$ we have
 \begin{align}\label{eq:Lipsch}
f(x^{k+1}) &\leq f(x^k) +\la \nabla f(x^k), x^{k+1}-x^k\ra 
+\frac{L}{2}\|x^{k+1}-x^k\|^2_2.
\end{align}
By Cauchy–Schwarz inequality and since $ab \leq \frac{a^2}{2}+\frac{b^2}{2}$ for any $a,b$, we get
\begin{align}\label{eq_fenchel_r}
    \la \nabla f(x^k) - \nabla^r f(x^k,\{\xi^{k+1}_l\}_{l=1}^r), x^{k+1}-x^k \ra &\leq  \frac{1}{2L}\|\nabla f(x^k) - \nabla^r f(x^k,\{\xi^{k+1}_l\}_{l=1}^r)\|^2_2 \notag \\
    &+\frac{L}{2}\|x^{k+1}-x^k\|^2_2.
\end{align}
Then we add and subtract $\nabla^r f(x^k, \{\xi_l^{k+1}\}_{l=1}^r)$  in \eqref{eq:Lipsch}.
Using  \eqref{eq_fenchel_r} 
 we get
 \begin{align}\label{eq:Lipsch_Fench}
     f(x^{k+1}) \leq f(x^k) &+\la \nabla^r f(x^k, \{\xi^{k+1}_l\}_{l=1}^r), x^{k+1}-x^k\ra +\frac{2L}{2}\|x^{k+1}-x^k\|^2_2 +\frac{1}{2L}\delta^2_{k+1},
 \end{align}
where  $\delta^2_{k+1} = \|\nabla^{r} f(x^k,\{\xi^{k+1}_l\}_{l=1}^{r})-\nabla f(x^k)\|_2^2$.
\\
From \eqref{eq:grad_meth}  and \eqref{eq:Lipsch_Fench}  we have
 \begin{align}\label{eq:Lipsch_Fench2}
     f(x^{k+1}) &\leq ~ f(x^k)- h\| \nabla^r f(x^k, \{\xi_l^{k+1}\}_{l=1}^r)\|_2^2  +L\dd{h^2}\|\nabla^r f(x^k, \{\xi_l^{k+1}\}_{l=1}^r)\|^2_2 +\frac{1}{2L}\delta^2_{k+1} \notag\\
      &=f(x^k)  -h(1-L h)\|\nabla^r f(x^k, \{\xi^{k+1}_l\}_{l=1}^r)\|^2_2 +\frac{1}{2L}\delta^2_{k+1}.
 \end{align}
Thus, the step size  $h$ is chosen as follows
 \begin{equation}\label{eq:step_size}
     h = \arg\max_{\alpha\geq 0}\alpha(1-L\alpha) = \frac{1}{2L}.
 \end{equation}
Substituting this $h$ in  \eqref{eq:Lipsch_Fench2} and using definition for $\delta^2_{k+1}$ we finalize the proof.
\qed
 

\begin{algorithm}[ht]
\caption{Stochastic Gradient Descent}
\label{Alg:SGM}

\begin{algorithmic}[1]
        \REQUIRE Number of iterations $N$, variance $D$, Lipschitz constant $L$, accuracy $\eps$, \dd{starting point $x^0$}.
                \STATE Calculate batch size 
        $
            r = \max\{\frac{D}{L\eps}, ~1\}.$
        \FOR{$k = 0,\dots,N-1 $}
 \STATE     \vspace{0.1cm}$
    x^{k+1} = x^k - \frac{1}{2L}\nabla^{r} f(x^k, \{\xi^{k+1}_l\}_{l=1}^{r}).
$\vspace{0.1cm}
                \ENDFOR

        \ENSURE  $\bar x^{N} =\frac{1}{N} \sum_{k=1}^N x^k$.    
\end{algorithmic}
\end{algorithm}

\begin{theorem}\label{Th1}
Algorithm \ref{Alg:SGM} with stochastic gradient oracle calls $ T = O\left(\frac{DR^2}{\eps^2}\right)$, batch size $r =\max\{\frac{D}{L\eps},1\} $, number of iterations $N = O\left(\frac{LR^2}{\eps}\right)$ outputs a point $\bar x^N$ satisfying
\begin{equation}\label{eq:get_goal}
    \E f(\bar x^N) - f(x^*)\leq \eps.
\end{equation}
\end{theorem}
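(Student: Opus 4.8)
The plan is to run the standard ``distance to the optimum'' analysis for stochastic gradient descent, but to use Lemma \ref{Lm_step_size} to absorb the squared-gradient term that this analysis produces rather than bounding it crudely. Throughout I write $R = \|x^0 - x^*\|_2$ and $g^k = \nabla^r f(x^k, \{\xi^{k+1}_l\}_{l=1}^r)$, and I use that the minibatch oracle is unbiased with $\E\|g^k - \nabla f(x^k)\|_2^2 \leq D/r$, which follows from \eqref{eq:stoch_or_asmpt2} and independence of the $\xi^{k+1}_l$ across the batch.

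First I would expand the squared distance using the update $x^{k+1} = x^k - \frac{1}{2L}g^k$:
\[
\|x^{k+1}-x^*\|_2^2 = \|x^k - x^*\|_2^2 - \tfrac1L \la g^k, x^k - x^*\ra + \tfrac{1}{4L^2}\|g^k\|_2^2.
\]
Next I would split $\la g^k, x^k - x^*\ra = \la \nabla f(x^k), x^k-x^*\ra + \la g^k - \nabla f(x^k), x^k - x^*\ra$ and bound the first piece below by $f(x^k) - f(x^*)$ using convexity. The crucial move is to eliminate the $\frac{1}{4L^2}\|g^k\|_2^2$ term: Lemma \ref{Lm_step_size} gives $\frac{1}{4L}\|g^k\|_2^2 \leq f(x^k) - f(x^{k+1}) + \frac{1}{2L}\delta_{k+1}^2$, so this term turns into a telescoping function-value difference plus a variance term. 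Collecting everything, I expect a per-step inequality of the form
\[
\tfrac1L\big(f(x^{k+1}) - f(x^*)\big) + \tfrac1L \la g^k - \nabla f(x^k), x^k - x^*\ra \leq \|x^k - x^*\|_2^2 - \|x^{k+1}-x^*\|_2^2 + \tfrac{1}{2L^2}\delta_{k+1}^2.
\]

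I would then take the full expectation. The noise cross term vanishes by the tower property, since $x^k - x^*$ is determined by $\xi^1,\dots,\xi^k$ while $\E[g^k - \nabla f(x^k)\mid x^k] = 0$; and $\E\delta_{k+1}^2 \leq D/r$. Summing over $k = 0,\dots,N-1$ telescopes the distance terms, and dropping the nonnegative $\E\|x^N - x^*\|_2^2$ gives $\sum_{k=1}^N \E(f(x^k) - f(x^*)) \leq L R^2 + \frac{N D}{2Lr}$. Finally, Jensen's inequality applied to the output $\bar x^N = \frac1N\sum_{k=1}^N x^k$ yields $\E f(\bar x^N) - f(x^*) \leq \frac{LR^2}{N} + \frac{D}{2Lr}$. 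Substituting $r = \max\{D/(L\eps),1\}$ makes the second term at most $\eps/2$, and choosing $N = 2LR^2/\eps = O(LR^2/\eps)$ makes the first term $\eps/2$, which establishes \eqref{eq:get_goal}; the total oracle count is then $T = Nr = O(DR^2/\eps^2)$.

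The main obstacle is the middle step: naively taking expectations in the distance recursion leaves an $\E\|g^k\|_2^2$ term which one would want to split as $\|\nabla f(x^k)\|_2^2 + D/r$, but the $\|\nabla f(x^k)\|_2^2$ part neither vanishes nor is obviously controllable without extra assumptions. Invoking Lemma \ref{Lm_step_size} is precisely what trades that term for a telescoping decrease in function value, and this is what makes the clean $O(LR^2/\eps)$ iteration bound possible. I also need to be careful that the variance bound uses the minibatch size (yielding $D/r$, not $D$), so that the choice $r = D/(L\eps)$ is exactly what drives the noise contribution below $\eps$.
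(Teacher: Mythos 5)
Your proposal is correct and follows essentially the same route as the paper's proof: expanding $\|x^{k+1}-x^*\|_2^2$ under the update, invoking Lemma \ref{Lm_step_size} to convert the $\frac{1}{4L^2}\|g^k\|_2^2$ term into a telescoping function-value decrease, using convexity and the conditional unbiasedness of the minibatch gradient (the paper phrases this via conditional expectations, you via the tower property, which is the same argument), then telescoping, applying Jensen to $\bar x^N$, and fixing $r=\max\{D/(L\eps),1\}$ and $N=O(LR^2/\eps)$ to split the error as $\eps/2+\eps/2$. No gaps; the only differences from the paper are organizational (order of taking expectations and of applying convexity), not substantive.
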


\textit{Proof.} For the sequence $x^1,x^2,...$ generated by Algorithm \ref{Alg:SGM} the following holds
   \begin{align}\label{eq:seq_xk}
       \|x^{k+1} - x\|_2^2 &= \|x^k -x - \frac{1}{2L}\nabla^{r_k} f(x,\{\xi_l^{k+1}\}_{l=1}^r)\|_2^2 \notag \\
       &= \|x^k -x\|_2^2 + \frac{1}{4L^2}\|\nabla^{r_k} f(x^k,\{\xi^{k+1}_l\}_{l=1}^r)\|_2^2 - \frac{1}{L} \la \nabla^{r_k} f(x^k,\{\xi_l^{k+1}\}_{l=1}^r),x^k-x \ra.
   \end{align}


From \eqref{eq:seq_xk} and Lemma \ref{Lm_step_size} we get
\begin{align}\label{eq:final_equ}
    &\la  \nabla^{r} f(x^k,\{\xi^{k+1}_l\}_{l=1}^r), x^k-x \ra \leq f(x^k) - f(x^{k+1}) \notag \\
    &+L\|x^k -x\|_2^2- L\|x^{k+1} -x\|_2^2   +\frac{1}{2L}\delta^2_{k+1},
\end{align}
where $\delta^2_{k+1} = \|\nabla^{r} f(x^k,\{\xi^{k+1}_l\}_{l=1}^{r})-\nabla f(x^k)\|_2^2$.
Using the convexity of $f(x)$, \eqref{eq:stoch_or_asmpt2} and taking the conditional expectation $\E_{x^{k+1}}[~\cdot~| x^1, \dots, x^k]$ from both sides of  \eqref{eq:final_equ}, we have
\begin{align}\label{eq:the_most_final_eq}
    f(x^k) -f(x) &\leq \la \nabla f(x\dd{^k}), x^k - x \ra  \notag \\
    &  \leq f(x^k) - \E_{x^{k+1}}[f(x^{k+1}) | x^1,..., x^k ] + L\|x^k-x\|_2 \notag \\ &- \E_{x^{k+1}}[L\|x^{k+1}-x\|^2_2~ |~ x^1,..., x^k ]+\frac{1}{2L}\E_{x^{k+1}}[\delta^2_{k+1} | x^1,..., x^k ].
\end{align}

Then we summarize this inequality and take the total expectation
\begin{equation}\label{eq:fin_eq_prev}
    \E f(\bar x^N) - f(x^*)\leq \frac{LR^2}{2N} + \frac{1}{2L}\E \delta^2,
\end{equation}
where we used $x=x^*$ and introduced upper bound   $\delta \geq \delta_{k}$ for any $k$.
We choose batch size $r$ in respect with $\E\delta = \eps L$. Since  
    $\E \|\nabla^r f(x^k,\{\xi^{k+1}_l\}_{l=1}^r)-\nabla f(x^k)\|_2^2\leq \frac{D}{r}$
we obtain
\begin{equation}\label{eq:con_r_batch_siez}
r = \max\left\{\frac{D}{L\eps},~1\right\}.
\end{equation}
We define total number of iterations $N$ from \eqref{eq:fin_eq_prev}  such that \eqref{eq:get_goal} holds. Summing $r$ over all iterations we get the total number of oracle calls $T$.
\qed

\subsection{Adaptive algorithm}
Now we assume that the constant $L$ may be unknown (but we can estimate it as $L\in \left[{\underline L}, \bar{L} \right]$ in case when we will obtain theoretical bounds), moreover, if the true variance $D$ is unavailable we use its upper bound $D_0 \geq D$. We provide an adaptive algorithm which iteratively tunes the Lipschitz constant.
Importantly, the approximation of the Lipschitz constant used by the algorithm may decrease as iteration go, leading to larger steps and faster convergence.


Sinse Lipschitz constant $L$ varies from iteration to iteration we need to define different batch size at each iteration. Similar to \eqref{eq:con_r_batch_siez} we choose batch size as follows $r_{k+1} = \max\left\{\frac{D_0}{L_{k+1}\eps}, ~1 \right\}.$
Using \eqref{eq:choose_L} we similarly to \eqref{eq:final_equ} get the following
\begin{align}\label{eq:final_equ_adaptive}
    \la \nabla^{r_{k+1}} f(x,\{\xi^{k+1}_l\}_{l=1}^{r_{k+1}}), x^k-x \ra &\leq f(x^k) - f(x^{k+1}) +L_{k+1}\|x^k -x\|_2^2\notag\\
    &- L_{k+1}\|x^{k+1} -x\|_2^2  + {\eps}/{2}.
\end{align}
Since $L_{k+1}$ is random now, $r_{k+1}$ will be random as well and, consequently, the total number of oracle calls $T$ is not precisely determined. Let us choose it similarly to its counterpart in Theorem \ref{Th1} which ensures \eqref{eq:get_goal}
\begin{equation}\label{eq:adap_total_n_iter}
    T = \sum_{k=1}^{N-1} r_{k+1} =  O\left(\frac{D_0R^2}{\eps^2} \right).
\end{equation}
This number of oracle calls  can be provided by choosing the last batch size $r_N$ as a residual of the sum \eqref{eq:adap_total_n_iter} and calculate the last Lipschitz constant $L_{N} = \frac{D_0}{r_{N}\eps}$. In practice, we do not need to limit ourselves by fixing the number of oracle calls $T$.
\begin{algorithm}[h!]
\caption{Adaptive Stochastic Gradient Descent}
\label{Alg:ASGM}
\begin{algorithmic}[1]
      \REQUIRE Number of iterations $N$, accuracy $\eps$, $\sigma_0$\pd{, initial guess $L_0$}, \dd{starting point $x^0$}. 
        \FOR{$k = 0,\dots,N-1 $}
         \STATE
         $L_{k+1}:=\frac{L_k}{4}$\\
         \REPEAT 
         \STATE $L_{k+1}:=2L_{k+1}$.
        \STATE
        $r_{k+1} = \max\{\frac{D_0}{L_{k+1}\eps}, ~1\}.$
       \STATE $
    x^{k+1} = x^k - \frac{1}{2L_{k+1}}\nabla^{r_{k+1}} f(x^k, \{\xi^{k+1}_l\}_{l=1}^{r_{k+1}}).
$
\UNTIL 
        \begin{align}\label{eq:choose_L}
            f(x^{k+1}) &\leq f(x^k) 
            + \la \nabla^{r_{k+1}} f(x^k,\{\xi_l^{k+1}\}_{l=1}^{r_{k+1}}), x^{k+1} - x^k \ra 
            + L_{k+1}\|x^{k+1} - x^k \|^2_2 + \eps/2.
        \end{align}
                \ENDFOR

        \ENSURE 
        \begin{equation}\label{eq:output:alg2}
                 \bar x^{N}= \frac{\sum_{k=0}^{N-1}\frac{1}{L_{k+1}}x^{k+1}}{\sum_{k=0}^{N-1}\frac{1}{L_{k+1}}}.
        \end{equation}
\end{algorithmic}
\end{algorithm}

From the convexity of $f$ we have
\begin{equation*}
    f(x^k) - f(x) \leq \la\nabla f(x\dd{^k}), x^k - x \ra.  
\end{equation*}
From this it follows 
\begin{align}
    &\la \nabla^{r_k} f(x\dd{^k},\{\xi^{k+1}_l\}_{l=1}^r),  x^k -x \ra \geq  f(x^k) - f(x) +  \la \nabla^{r_k} f(x\dd{^k},\{\xi^{k+1}_l\}_{l=1}^r) - \nabla f(x^k),  x^k -x \ra. 
\end{align}

From this and \eqref{eq:final_equ_adaptive} we have
\begin{align}\label{eq:to_sum}
    &\frac{1}{L_{k+1}}(f(x^k) -f(x)) +  \frac{1}{L_{k+1}}\la \nabla^{r_{k+1}} f(x\dd{^k},\{\xi^{k+1}_l\}_{l=1}^{r_{k+1}}) - \nabla f(x^k),  x^k -x \ra \notag \\
    &\leq\frac{1}{L_{k+1}}\left(f(x^k) - f(x^{k+1})\right) + \|x^k -x\|_2^2  - \|x^{k+1} -x\|_2^2 + \frac{\eps}{2L_{k+1}}.
\end{align}
The same proof steps  with taking conditional expectation, summing progress over iterations and taking the full expectation as for the non-adaptive case fails here. This happens due to this fact: the following sum $\sum_{k=0}^{N-1} \frac{1}{L_{k+1}} \la \nabla^{r_{k+1}} f(x^k,\{\xi_l^{r_{k+1}}\}_{l=1}^{r_{k+1}}) - \nabla f(x^k),  x^k -x \ra$  is not the sum of martingale-differences and therefore the total expectation is not zero, since $r_k$ is random. Thus, unfortunately, we cannot guarantee  that Algorithm \ref{Alg:ASGM} converges in $O\left(\frac{LR^2}{\eps}\right)$ iterations.\footnote{We expect that this problem can be solved by using the technique from \cite{gine2016mathematical,iusem2017extragradient}.}

However,
numerical experiments are in a good agreement with the provided complexity bound. Nevertheless, to overcome theoretical gap, we refer to large deviation technique and slightly change Algorithm \ref{Alg:ASGM}
and modify the assumptions  on the stochastic oracle
\eqref{eq:stoch_or_asmpt2} 
\begin{equation}
     \E \left(\exp(\|\nabla f(x,\xi)-\nabla f(x)\|_2^2 \sigma^{-2})\right) \leq \exp{(1)}.\notag
\end{equation}
If the true variance $\sigma^2$ is unavailable we use its upper bound $\sigma^2_0 \geq \sigma^2$.

\begin{theorem}\label{Th:adaptiv_stoch_grad_descent}
The output $\bar x^N$
of Algorithm \ref{Alg:ASGM} 
with the following change in the step 2: $L_{k+1}:=\max\left\{\frac{L_k}{2}, {\underline  L} \right\}$; in the change step 4 $L_{k+1}:=\min\left\{2L_{k+1},\bar{L}\right\}$; and following change in step 5:
\[r_{k+1} =\max\left\{ \Theta\left(\frac{\sigma^2_0\bar L^2}{L_{k}\eps\underline  L^2}({\ln \alpha^{-1}}+{m\ln N)} )\right),1\right\},\]
    where 
    ${\underline L} \le L_0 \le \bar{L}$,
    ${\underline L} \equiv L_0 \equiv \bar{L} \text{~~mod~~} 2$ and $m={\log_2(\bar L/\underline  L)} \in \mathbb N$,
after $N=\Theta\left(\frac{\bar L R^2}{\eps}\right)$ iterations 
and $T=\tilde O\left(\frac{\sigma_0^2R^2 \bar L^3}{\eps^2 \underline L^3}\right)$ gradient oracle calls,
satisfies the following inequality with probability $\geq 1-\alpha$
\begin{equation}
   f(\bar x^N) -f(x^*)\leq \eps.  \notag
\end{equation}
\end{theorem}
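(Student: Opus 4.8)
The plan is to turn the per-iteration estimate \eqref{eq:to_sum} into a bound on $f(\bar x^N)-f(x^*)$ and then control, with high probability, the single stochastic term that survives. Setting $x=x^*$ in \eqref{eq:to_sum} and summing over $k=0,\dots,N-1$, the terms $\frac{1}{L_{k+1}}(f(x^k)-f(x))$ and $\frac{1}{L_{k+1}}(f(x^k)-f(x^{k+1}))$ combine into $\frac{1}{L_{k+1}}(f(x^{k+1})-f(x^*))$, while the squared distances telescope. Writing $S_N=\sum_{k=0}^{N-1}\frac{1}{L_{k+1}}$ and $W=\sum_{k=0}^{N-1}\frac{1}{L_{k+1}}\la \nabla^{r_{k+1}}f(x^k,\cdot)-\nabla f(x^k),x^k-x^*\ra$, dropping $-\|x^N-x^*\|_2^2\le 0$ and dividing by $S_N$ gives, after Jensen's inequality applied to the output \eqref{eq:output:alg2},
\[
f(\bar x^N)-f(x^*)\le \frac{R^2}{S_N}+\frac{\eps}{2}-\frac{W}{S_N}.
\]
Since $L_{k+1}\le\bar L$ we have $S_N\ge N/\bar L$, so the choice $N=\Theta(\bar L R^2/\eps)$ makes $R^2/S_N\le \eps/4$. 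It then remains to show that $-W/S_N\le \eps/4$, i.e. $|W|\le \tfrac{\eps}{4}S_N$, holds with probability at least $1-\alpha$.

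The obstacle, already flagged before the theorem, is that $W$ is \emph{not} a sum of martingale differences: $L_{k+1}$ and hence the batch size $r_{k+1}$ are produced by the line search and therefore depend on the noise $\nabla^{r_{k+1}}f(x^k,\cdot)-\nabla f(x^k)$, so the conditional expectation of each summand need not vanish. The key is that the modified step~2/step~4 updates ($L_{k+1}:=\max\{L_k/2,\underline L\}$, then $L_{k+1}:=\min\{2L_{k+1},\bar L\}$) keep every $L_{k+1}$ on the geometric grid $\{\underline L\,2^j:j=0,\dots,m\}$ and, through the controlled halve-then-double dynamics, confine the realized sequence $(L_1,\dots,L_N)$ to a \emph{deterministic} family $\mathcal L_N$ of size at most $N^{m}$, so that $\ln|\mathcal L_N|\lesssim m\ln N$. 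I would therefore fix an admissible sequence $\vec\ell\in\mathcal L_N$ and analyze the coupled run in which the Lipschitz constants are frozen to $\vec\ell$ and the batches $r(\ell_{k+1})$ are deterministic; along this run the corresponding sum $W_{\vec\ell}$ \emph{is} a martingale-difference sum, and the sub-Gaussian assumption $\E\exp(\|\nabla f(x,\xi)-\nabla f(x)\|_2^2\sigma^{-2})\le \exp(1)$ yields a Freedman/Bernstein-type bound $|W_{\vec\ell}|\lesssim \sqrt{\ln(1/\beta)\sum_k R^2\sigma_0^2/(\ell_{k+1}^2 r(\ell_{k+1}))}$ with failure probability $\beta$. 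A union bound over the $|\mathcal L_N|\le N^m$ admissible sequences makes this hold simultaneously, and since the actual adaptive run realizes one such $\vec\ell$ under the same sample draws, the bound transfers to $W$; setting $N^m\beta=\alpha$ produces exactly the factor $\ln\alpha^{-1}+m\ln N$ appearing in $r_{k+1}$.

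With this concentration in hand, I would check that the prescribed $r_{k+1}=\Theta\big(\tfrac{\sigma_0^2\bar L^2}{L_k\eps\,\underline L^2}(\ln\alpha^{-1}+m\ln N)\big)$ drives the deviation below $\tfrac{\eps}{4}S_N$: substituting $r_{k+1}$ and using $\underline L\le L_{k+1},L_k\le\bar L$ together with $N=\Theta(\bar L R^2/\eps)$ reduces $\ln(1/\beta)\sum_k R^2\sigma_0^2/(L_{k+1}^2 r_{k+1})$ to $O(R^4)$, whence $|W|\lesssim R^2\lesssim \eps S_N$. Counting oracle calls, $T=\sum_{k}r_{k+1}=N\cdot\tilde O(\sigma_0^2\bar L^2/(\underline L^3\eps))=\tilde O(\sigma_0^2 R^2\bar L^3/(\eps^2\underline L^3))$, as claimed. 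One technical point to settle is that the Freedman bound needs control of the predictable quadratic variation, which contains $\|x^k-x^*\|_2^2$; since the domain is unbounded, this should be closed by a stopping-time/induction argument showing that, on the good event, $\|x^k-x^*\|_2\le cR$ for all $k$ directly from the telescoped recursion.

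The hardest part will be the coupling between the genuinely adaptive run and the frozen-sequence runs used in the union bound — making precise that the deviation bound proved for each fixed $\vec\ell$ transfers to the random realized sequence with the same samples, and that the confinement $|\mathcal L_N|\le N^m$ is exactly what the modified update guarantees. The self-referential control of $\|x^k-x^*\|_2$ inside the variance proxy is a secondary but standard difficulty.
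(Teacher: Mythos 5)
Your proposal follows essentially the same route as the paper: the same telescoping of \eqref{eq:to_sum} with Jensen's inequality, the same key idea of a union bound over the at most $N^m$ admissible realizations of $(L_1,\dots,L_N)$ combined with a martingale concentration inequality (the paper uses Azuma--Hoeffding where you invoke Freedman/Bernstein), and the same induction-on-$\|x^k-x^*\|_2$ device to control the variance proxy, yielding the identical bounds on $N$ and $T$. The argument is correct and matches the paper's proof of Theorem \ref{Th:adaptiv_stoch_grad_descent} via Lemma \ref{Lm:Delta}.
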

The proof of this theorem is significantly rely on the following lemma.
\begin{lemma}\label{Lm:Delta}
 Let the sequence $x^0,x^1,\dots, x^N$ be generated after $N =O\left(\frac{\bar LR^2}{\eps}\right)$  iterations of Algorithm  \ref{Alg:ASGM} with the change made in Theorem \ref{Th:adaptiv_stoch_grad_descent}. Then   with probability $\geq 1-\alpha$ it holds
\begin{align*}
\sum_{k=0}^{N-1}\frac{1}{L_{k+1}}    \la  \nabla f(x^k) -  &\nabla^{r_{k+1}} f(x^k,\{\xi^{k+1}_l\}_{l=1}^{r_{k+1}}),  ~x^k - x^*  \ra = O\left(\|x^0-x^*\|^2\right)= O\left(R^2\right).
\end{align*}
\end{lemma}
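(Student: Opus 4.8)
The plan is to confront directly the obstacle flagged just before the lemma: the summand at step $k$ is coupled to the noise through the adaptively chosen $L_{k+1}$ (hence $r_{k+1}$), so the sum is not a sum of martingale differences and its conditional mean need not vanish. The modification in Theorem \ref{Th:adaptiv_stoch_grad_descent} is precisely what makes a large-deviation argument possible: the updated steps 2 and 4 confine $L_{k+1}$ to the dyadic grid $\{\underline L, 2\underline L,\dots,\bar L\}$ of cardinality $m+1=\log_2(\bar L/\underline L)+1$, and the oracle now has a sub-Gaussian tail. I would decouple the selection from the noise by a union bound over this finite grid.

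First I would fix the filtration $\mathcal G_k$ generated by all stochastic gradients drawn through outer iteration $k$, so that $x^k$, $\nabla f(x^k)$ and $\|x^k-x^*\|_2$ are $\mathcal G_k$-measurable while the fresh gradients used at iteration $k+1$ are independent of $\mathcal G_k$. For each grid value $L=2^i\underline L$ I introduce the \emph{ghost} deviation
\[
\zeta_k(L)=\tfrac{1}{L}\la \nabla f(x^k)-\nabla^{r(L)}f(x^k,\{\xi^{k+1}_l\}_{l=1}^{r(L)}),\,x^k-x^*\ra,
\]
built from the first $r(L)$ fresh samples, where $r(L)$ is the \emph{deterministic} batch size attached to that grid value. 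Since $r(L)$ no longer depends on the noise, \eqref{eq:stoch_or_asmpt2} gives $\E[\zeta_k(L)\mid \mathcal G_k]=0$, so for each fixed $L$ the sequence $\{\zeta_k(L)\}_k$ is a martingale-difference sequence.

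Next I would concentrate each of these $m+1$ ghost martingales. Under the exponential-moment assumption, $\zeta_k(L)$ is conditionally sub-Gaussian with proxy of order $\tfrac1L\,\tfrac{\sigma_0}{\sqrt{r(L)}}\|x^k-x^*\|_2$, and an Azuma/Bernstein-type inequality for sub-Gaussian differences bounds $\big|\sum_k\zeta_k(L)\big|$ by a multiple of $\sqrt{\Lambda}\,\big(\sum_k \tfrac1{L^2}\tfrac{\sigma_0^2}{r(L)}\|x^k-x^*\|_2^2\big)^{1/2}$ with $\Lambda=\ln\alpha^{-1}+m\ln N$. Substituting $r(L)=\Theta\!\big(\tfrac{\sigma_0^2\bar L^2}{L\eps\underline L^2}\Lambda\big)$, the a priori bound $\|x^k-x^*\|_2=O(R)$ and $N=\Theta(\bar L R^2/\eps)$ collapses this to $O(R^2)$, on an event of probability at least $1-\alpha/(m+1)$; a union bound over the $m+1$ grid values yields the uniform statement with probability $\ge 1-\alpha$. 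The a priori bound $\|x^k-x^*\|_2=O(R)$ I would obtain on the same good event by induction from the one-step recursion \eqref{eq:to_sum}, closing the loop.

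The step I expect to be the main obstacle is passing from the ghost martingales to the actually realized sum $\sum_k\zeta_k(L_{k+1})$. Because $L_{k+1}$ is chosen from the same fresh gradients, this sum selects, at each $k$, one grid value out of $m+1$ in a data-dependent way; it is therefore neither one of the ghost martingales nor itself a martingale-difference sequence, which is exactly the coupling the authors flagged. My plan is to dominate it on the good event by the union-bound control of all $m+1$ ghost sums together with the uniform-in-$k$ control of the increments, so that every admissible realization of the selection is accounted for; this is where the finiteness of the grid and the extra $m\ln N$ term inside $r(L)$ are indispensable, and it is the delicate point on which the whole argument hinges.
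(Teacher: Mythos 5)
Your setup (the filtration, the ghost martingale differences for fixed grid values, Azuma for sub-Gaussian increments, and the a priori bound $\max_k\|x^k-x^*\|_2=O(R)$ obtained by induction from \eqref{eq:to_sum} on the good event) matches the paper's ingredients, but the combinatorial heart of the argument --- \emph{what to take the union bound over} --- is wrong, and you have flagged the resulting hole yourself without closing it. You union bound over the $m+1$ grid values of $L$, i.e.\ over the $m+1$ constant-$L$ ghost sums $\sum_k \zeta_k(L)$. The realized sum $\sum_k \zeta_k(L_{k+1})$ is not one of these objects, and controlling all of them does not control it: even on the event where every full ghost sum is $O(R^2)$, a data-dependent selection of one grid value per iteration can realize something as large as $\sum_k \max_L \zeta_k(L)$, which is of order $N$ times the typical increment, i.e.\ roughly $R^2\sqrt{N/(\ln\alpha^{-1}+m\ln N)}$ --- a loss of order $\sqrt{N}$ relative to the martingale bound. ``Uniform-in-$k$ control of the increments'' cannot repair this, because summing $N$ worst-case increments is exactly how the $\sqrt{N}$ factor is lost. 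So the final step of your plan, the one on which you say the whole argument hinges, fails as stated.

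The paper closes this gap by taking the union bound over entire admissible \emph{trajectories} $(L^i_1,\dots,L^i_N)$ rather than over single grid values: from steps 2 and 4 of Algorithm \ref{Alg:ASGM} the number of such trajectories is at most $N^m$; for each fixed trajectory the batch sizes are deterministic, so $\Delta_N(\{L^i_{k+1}\}_{k=0}^{N-1})$ is a genuine sum of martingale differences to which Azuma--Hoeffding applies; and the realized sum is by construction equal to one of these at most $N^m$ ghost sums, so the union bound genuinely covers it. This is also the only place where the $m\ln N$ term in the batch size is needed: it compensates $\ln(N^m)=m\ln N$ from the trajectory count, whereas your union bound over $m+1$ values would only cost $\ln(m+1)$ --- a telltale sign that your index set is too small. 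If you replace your per-value union bound by the per-trajectory union bound, keeping the rest of your outline (the inductive $O(R)$ bound on the same high-probability event, the variance computation through $r_{k+1}$, and $N=\Theta(\bar L R^2/\eps)$), the argument goes through and coincides with the paper's proof.
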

\textit{Sketch of the Proof.}
Let us denote
 \begin{align}
  &\Delta_N (\{L_{k+1}\}_{k=0}^{N-1}) = \sum_{k=0}^{N-1}\frac{1}{L_{k+1}}  \la  \nabla f(x^k) -\nabla^{r_{k+1}} f(x^k,\{\xi^{k+1}_l\}_{l=1}^{r_{k+1}}),  x^k - x^*  \ra. \notag 
\end{align}
Let $L^i_1, L^i_2, \dots L^i_N$ be $i$-th realization of Algorithm \ref{Alg:ASGM}  among all possible realization.  Then from union bound we have
for $t\geq 0$
\begin{align}\label{eq:union_bound_in}
    &\Prob(\Delta_N (\{L_{k+1}\}_{k=0}^{N-1})\geq t)\notag \le \Prob(\cup_{i=1}^{|\{L_k\}|}\Delta_N (\{L^i_{k+1}\}_{k=0}^{N-1})\geq t) \notag \\
    &\leq \sum_{i=1}^{|\{L_k\}|}\Prob(\Delta_N (\{L^i_{k+1}\}_{k=0}^{N-1})\geq t)=|\{L_k\}| \cdot \Prob(\Delta_N (\{L^i_{k+1}\}_{k=0}^{N-1})\geq t),
\end{align}
where $|\{L_k\}|$ is the cardinality (all realizations of $L^i_1, L^i_2, \dots L^i_N$). From steps 2 and 4 of Algorithm \ref{Alg:ASGM}  $|\{L_k\}| \leq N^m$, where we suppose $m={\log_2(\bar L/\underline  L)} \in \mathbb N$.\\
Using Azuma--Hoeffding inequality we have 
\begin{align}\label{eq:Azum_Hoef_in}
    \Prob(\Delta_N (\{L^i_{k+1}\}_{k=0}^{N-1})\geq t) \leq \exp(-t^2/C^2), 
\end{align}
where constant $C$ will be determined further.\\
From \eqref{eq:union_bound_in} and \eqref{eq:Azum_Hoef_in} we have
\begin{align*}
      &\Prob(\Delta_N (\{L_{k+1}\}_{k=0}^{N-1})\geq t)\leq|L_k| \exp{(-t^2/C^2)} =\exp{\left(-t^2/C^2 +m\ln N\right)}.
\end{align*}
 From this inequality with the following change 
\begin{align}
     t=\sqrt{C^2\ln\alpha^{-1} + C^{2}m\ln N}\leq C(\sqrt{\ln\alpha^{-1}} + \sqrt{m\ln N})\notag
\end{align}
we get with probability $\geq 1-\alpha$ the following
\begin{align}\label{eq:last_in}
    \Delta_N (\{L_{k+1}\}_{k=0}^{N-1}) \leq C(\sqrt{\ln\alpha^{-1}} + \sqrt{m\ln N}).
\end{align}
Now we need to estimate $C$.
From Cauchy–Schwarz inequality we have
 \begin{align}
    &\la  \nabla f(x^k) - \nabla^{r_{k+1}} f(x,\{\xi^{k+1}_l\}_{l=1}^{r_{k+1}}),  x^k -x^* \ra \leq \notag\\
    & \max\limits_{k=1,\dots, N}\|x^k -x^*\|_2  \|\nabla f(x^k) - \nabla^{r_{k+1}} f(x^k,\{\xi^{k+1}_l\}_{l=1}^{r_{k+1}})\|_2. \notag
\end{align}

Follow the papers \cite{dvinskikh2019dual,gorbunov2019optimal} we can similarly prove
the following result for the Algorithm \ref{Alg:ASGM}
\begin{align}
\label{eq:R_bound}
\max\limits_{k=1,\dots, N}\|x^k -x^*\|_2 = O\left(\|x^0-x^*\|\dd{_2}\right)=O\left(R\right).
\end{align}
\att{Indeed, denote $R_k = \|x^k - x^*\|$. If we sum \eqref{eq:to_sum} for $k=0 \dots N-1$ for $x = x^*$ and fixed realization $L^i_1, L^i_2, \dots L^i_N$, we obtain
\begin{align}
R_N^2 &\leq R_0^2 + \frac{\eps}{2}\sum_{k=0}^{N-1} \frac{1}{L_{k+1}^i}  + \sum_{k=0}^{N-1} \frac{1}{L_{k+1}^i} \la \nabla f(x^k) - \nabla^{r_{k+1}} f(x\dd{^k},\{\xi^{k+1}_l\}_{l=1}^{r_{k+1}}),  x^k -x^* \ra\\
&\leq R_0^2 + \frac{N \eps}{ 2 \underline L} + \sum_{k=0}^{N-1} \frac{1}{L_{k+1}^i} \la \nabla f(x^k) - \nabla^{r_{k+1}} f(x\dd{^k},\{\xi^{k+1}_l\}_{l=1}^{r_{k+1}}),  x^k -x^* \ra\\
&\leq R_0^2 + O\left(R_0^2 \frac{\bar L}{\underline L}\right) + \sum_{k=0}^{N-1} \frac{1}{L_{k+1}^i} \la \nabla f(x^k) - \nabla^{r_{k+1}} f(x\dd{^k},\{\xi^{k+1}_l\}_{l=1}^{r_{k+1}}),  x^k -x^* \ra.
\end{align}
By induction, let us assume that $R_k^2 \leq A_3 R_0^2$ for all $k = 0 \dots N-1$, we define $A_3$ further. Due to the following inequality
\begin{align}
\frac{1}{L_{k+1}^i} \la \nabla f(x^k) - \nabla^{r_{k+1}} f(x\dd{^k},\{\xi^{k+1}_l\}_{l=1}^{r_{k+1}}),  x^k -x^* \ra \leq \frac{\sqrt{A_3} R_0}{\underline L} \|f(x^k) - \nabla^{r_{k+1}} f(x\dd{^k},\{\xi^{k+1}_l\}_{l=1}^{r_{k+1}})\|_2
\end{align}
for all $k = 0 \dots N-1$ and assumption about stochastic oracle we have, that each conditional variance of each term in the last sum is less or equal to
\begin{align}
O\left(\frac{A_3 R_0^2 \sigma^2}{\underline L^2 r_{k+1}}\right) \leq O\left(\frac{A_3 R_0^2 \eps}{({\ln \alpha^{-1}}+{m\ln N})\underline L }\right).
\end{align}
Using Lemma 2 from \cite{lan2012validation} we have, that with probability $\geq 1 - \alpha$\footnote{We have to insure that with high probability the next inequality holds for all realizations $L_k^i$, thus we have term $\sqrt{{\ln \alpha^{-1}}+{m\ln N}}$.}
\begin{align}
R_N^2 &\leq O\left(R_0^2\right) + O\left(\sqrt{{\ln \alpha^{-1}}+{m\ln N}}\sqrt{\frac{A_3 R_0^2 N \eps}{({\ln \alpha^{-1}}+{m\ln N})\underline L}}\right)\leq A_1 R_0^2 + A_2 \sqrt{A_3} R_0^2,
\end{align}
where $A_1$ and $A_2$ are constants.
 Let us define $A_3$ using equation $A_3 = 1 + A_1 + A_2 \sqrt{A_3}$, thus
\begin{align}
R_N^2 \leq A_1 R_0^2 + A_2 \sqrt{A_3} R_0^2 \leq A_3 R_0^2.
\end{align}
We proved inequality \eqref{eq:R_bound}.
}

Using \eqref{eq:R_bound} we get

\begin{align}\label{eq:to_union}
    &\la  \nabla f(x^k) - \nabla^{r_{k+1}} f(x^k,\{\xi^{k+1}_l\}_{l=1}^{r_{k+1}}),  x^k -x^* \ra \leq cR^2\|\nabla f(x^k) - \nabla^{r_{k+1}} f(x^k,\{\xi^{k+1}_l\}_{l=1}^{r_{k+1}})\|_2, 
\end{align}
where $c>0$.
 Now we estimate $C$ in \eqref{eq:Azum_Hoef_in} using $r_{k+1}$ from Algorithm \ref{Alg:ASGM}, $N=\Theta\left(\frac{\bar LR^2}{\eps}\right)$ and
$ \E \exp\left(\frac{\| \nabla f(x^k) - \nabla^{r_{k+1}} f(x^k,\{\xi^{k+1}_l\}_{l=1}^{r_{k+1}})\|_2^2}{\tilde c \sigma^2/r^{k+1}}\right) \leq \exp{(1)}$, where $\tilde c>1$
 \begin{align}
  &C = \sqrt{3\sum_{k=0}^{N-1}\frac{c^2R^2}{L^2_{k+1}}\frac{\tilde c\sigma}{r_{k+1}}} = \Theta\left(\frac{R^2}{\sqrt{\ln \alpha^{-1}} + \sqrt{m\ln N}} \right).\notag
\end{align}
Substituting  this $C$ in \eqref{eq:last_in} we get the statement of the lemma.
\qed

\textit{Proof of Theorem \ref{Th:adaptiv_stoch_grad_descent}.}\\
To get the total number of oracle calls we summarize the batch size over all iterations
\[
T = \sum_{k=0}^{N-1}\tilde O\left(\frac{\sigma^2_0 \bar L^2}{L_{k}\eps \underline  L^2} \right) \leq \tilde O\left(\frac{\sigma^2_0R^2 \bar L^3}{\eps^2 \underline  L^3} \right).
\]
Let us summarize \eqref{eq:to_sum} over all iterations, divide it by $\sum_{k=0}^{N-1}1/L_{k+1}$ and take $x=x^*$. Then using Jensen's inequality we get
 \begin{align*}
    f(\bar x^N)-f(x^*) &\leq \frac{1}{\sum_{k=0}^{N-1}\frac{1}{L_{k+1}}}\|x^0 -x^*\|_2^2 +\frac{\eps}{2} \\
    & + \frac{1}{\sum_{k=0}^{N-1}\frac{1}{L_{k+1}}}\sum_{k=0}^{N-1}\frac{1}{L_{k+1}} \la  \nabla f(x^k) -\nabla^{r_{k+1}} f(x^k,\{\xi^{k+1}_l\}_{l=1}^{r_{k+1}}),  x^k -x^*  \ra,
\end{align*}
where  $\bar x^N$ is defined in \eqref{eq:output:alg2}
By the definition of
$\bar L$   we have
 \begin{align}\label{eq:last_term_main}
    f&(\bar x^N)-f(x^*)\leq  \frac{\bar LR^2}{N} +\frac{\eps}{2} +\frac{\bar L}{N}\sum_{k=0}^{N-1}\frac{1}{L_{k+1}}   \la  \nabla f(x^k) -\nabla^{r_{k+1}} f(x^k,\{\xi^{k+1}_l\}_{l=1}^{r_{k+1}}),  x^k - x^*  \ra.~ ~ ~ ~ ~ 
\end{align}
Next we use Lemma \ref{Lm:Delta} for the last term in \eqref{eq:last_term_main} and $N = \Theta\left(\frac{\bar L R^2}{\eps}\right)$ to get the statement of the theorem.
\qed

\subsection{Accelerated \pd{adaptive} algorithm}
To compare our complexity bounds for adaptive stochastic gradient descent with the bounds for accelerated variant of our algorithm we refer to \cite{ogaltsov2019heuristic}. For the reader convenience we provide accelerated algorithm  in a simpler form.

\begin{algorithm}[ht]
\caption{Adaptive  Stochastic Accelerated Gradient Method}
\label{Alg:Acc}

\begin{algorithmic}[1]
 \REQUIRE Number of iterations $N$, $D_0$ accuracy $\eps$, $A_0=0$, \pd{initial guess $L_0$,} \dd{$y^0=u^0=x^0$.}
        \FOR{$k = 0,\dots,N-1 $}
        \STATE \pd{$L_{k+1}:=\frac{L_k}{4}.$}
        \REPEAT
        \STATE $L_{k+1} := 2L_{k+1}. $ 
        \STATE $\alpha_{k+1} = ({1+\sqrt{1+\at{4}A_k L_{k+1}}})/{(\at{2}L_{k+1})},$\\
            $A_{k+1} = A_k+\alpha_{k+1}.$
           \STATE 
        $ r_{k+1} = \max\left\{ \frac{\alpha_{k+1}D_0}{\eps}, ~1\right\}$
         \STATE   $ y^{k+1} = (\alpha_{k+1}u^{k} + A_k x^k)/A_{k+1}. $
 \STATE      
$   u^{k+1} = u^k - 
\alpha_{k+1}\nabla^{r_{k+1}} f(y^{k+1}, \{\xi^{k+1}_l\}_{l=1}^{r_{k+1}}).
$
\STATE 
$  x^{k+1} = (\alpha_{k+1}u^{k+1} + A_kx^k)/A_{k+1}.
$
\UNTIL 
        \begin{align}\label{eq:choose_L_non_conv}
            f(x^{k+1}) &\leq f(y^{k+1}) 
            +\la \nabla^{r_{k+1}} f(y^{k+1},\{\xi_l^{k+1}\}_{l=1}^{r_{k+1}}), x^{k+1} - y^{k+1} \ra 
            \notag \\&
            + L_{k+1}\|x^{k+1} - y^{k+1} \|^2_2 + \frac{\alpha_{k+1}}{2A_{k+1}}\eps.
        \end{align}
                \ENDFOR

        \ENSURE     $x^N$. 
\end{algorithmic}
\end{algorithm}

For Algorithm \ref{Alg:Acc} the number of oracle calls $T$ will be the same as for the non-accelerated version of the algorithm (see \eqref{eq:adap_total_n_iter})
while  the number of iterations will be smaller
$N = O\left(\sqrt{{LR^2}/{\eps}}\right).$
Both these bounds are optimal \cite{woodworth2018graph}. 

Unfortunately, to prove these bounds we also met the problem of martingale-differences mentioned above. 
But analogously to Theorem~\ref{Th:adaptiv_stoch_grad_descent} one can obtain a little bit weaker result.

\begin{theorem}\label{Th:fast_adaptiv_stoch_grad_descent}
The output 
of Algorithm \ref{Alg:Acc} 
with the following change in the steps 2: $L_{k+1}:=\max\left\{\frac{L_k}{2}, {\underline  L} \right\}$; and following change in step 4 $L_{k+1}:=\min\left\{2L_{k+1},\bar{L}\right\}$; in step 6:
\[r_{k+1} =\max\left\{ \Theta\left(\frac{\alpha_k\sigma^2_0\bar L^2}{\eps\underline  L^2}({\ln \alpha^{-1}}+{m\ln N)} \right),1\right\};\]
    where 
    ${\underline L} \le L_0 \le \bar{L}$,
    ${\underline L} \equiv L_0 \equiv \bar{L} \text{~~mod~~} 2$ and $m={\log_2(\bar L/\underline  L)} \in \mathbb N$,
after $N=\Theta\left(\sqrt{\frac{\bar L R^2}{\eps}}\right)$ iterations 
and $T=\tilde O\left(\frac{\sigma_0^2R^2 \bar L^3}{\eps^2 \underline L^3}\right)$ gradient oracle calls,
satisfies the following inequality with probability $\geq 1-\alpha$
\begin{equation}
   f(x^N) -f(x^*)\leq \eps.  \notag
\end{equation}
\end{theorem}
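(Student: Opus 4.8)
The plan is to mirror the proof of Theorem~\ref{Th:adaptiv_stoch_grad_descent}, replacing the simple weighted-averaging step by the standard potential-function (estimating-sequence) analysis of the accelerated scheme and then re-using the large-deviation machinery of Lemma~\ref{Lm:Delta} almost verbatim. First I would record the algebraic identities enforced by the steps of Algorithm~\ref{Alg:Acc}: the definition of $\alpha_{k+1}$ gives $A_{k+1}=L_{k+1}\alpha_{k+1}^2$, while the definitions of $y^{k+1}$ and $x^{k+1}$ give $A_{k+1}y^{k+1}=\alpha_{k+1}u^k+A_kx^k$ and
\[
x^{k+1}-y^{k+1}=\frac{\alpha_{k+1}}{A_{k+1}}(u^{k+1}-u^k)=-\frac{1}{L_{k+1}}g^{k+1},\qquad g^{k+1}:=\nabla^{r_{k+1}}f(y^{k+1},\{\xi_l^{k+1}\}_{l=1}^{r_{k+1}}).
\]

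Introduce the potential $\Phi_k=A_kf(x^k)+\tfrac12\|u^k-x^*\|_2^2$. Multiplying the line-search condition \eqref{eq:choose_L_non_conv} by $A_{k+1}$, splitting $A_{k+1}f(y^{k+1})=A_kf(y^{k+1})+\alpha_{k+1}f(y^{k+1})$ and applying convexity of $f$ at $y^{k+1}$ against $x^k$ and against $x^*$ (with the \emph{true} gradient $\nabla f(y^{k+1})$), the identity $A_{k+1}y^{k+1}=\alpha_{k+1}u^k+A_kx^k$ collects the momentum contribution into $\alpha_{k+1}\la\nabla f(y^{k+1}),u^k-x^*\ra$. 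Writing $\nabla f(y^{k+1})=g^{k+1}+(\nabla f(y^{k+1})-g^{k+1})$ and converting $\alpha_{k+1}\la g^{k+1},u^k-x^*\ra$ into squared distances through $u^{k+1}=u^k-\alpha_{k+1}g^{k+1}$, the $\|g^{k+1}\|_2^2$ terms are absorbed by the quadratic term of \eqref{eq:choose_L_non_conv} (this is where the exact constant of the line search enters), leaving the clean recursion
\[
\Phi_{k+1}-A_{k+1}f(x^*)\le \Phi_k-A_kf(x^*)+\alpha_{k+1}\la \nabla f(y^{k+1})-g^{k+1},\,u^k-x^*\ra+\frac{\alpha_{k+1}}{2}\eps.
\]
Summing over $k=0,\dots,N-1$ and using $A_0=0$, $u^0=x^0$, and $\Phi_N\ge A_Nf(x^N)$ yields
\[
f(x^N)-f(x^*)\le \frac{R^2}{2A_N}+\frac{1}{A_N}\sum_{k=0}^{N-1}\alpha_{k+1}\la\nabla f(y^{k+1})-g^{k+1},\,u^k-x^*\ra+\frac{\eps}{2}.
\]

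It remains to size the three terms. The growth estimate $\sqrt{A_{k+1}}-\sqrt{A_k}\ge \tfrac{1}{2\sqrt{L_{k+1}}}\ge\tfrac{1}{2\sqrt{\bar L}}$ gives $A_N\ge N^2/(4\bar L)$, so the choice $N=\Theta(\sqrt{\bar LR^2/\eps})$ drives the first term below $\eps/4$; the oracle count $T=\sum_k r_{k+1}$ then follows from $\sum_k\alpha_k=\Theta(A_N)$ and the prescribed $r_{k+1}$ exactly as in Theorem~\ref{Th:adaptiv_stoch_grad_descent}. The middle term is the accelerated analogue of Lemma~\ref{Lm:Delta}, and bounding it is the main obstacle: the sum $\Delta_N^{\mathrm{acc}}:=\sum_{k}\alpha_{k+1}\la\nabla f(y^{k+1})-g^{k+1},u^k-x^*\ra$ is \emph{not} a sum of martingale differences, because the batch sizes $r_{k+1}$, and hence the conditional law of $g^{k+1}$, depend on the random estimates $L_{k+1}$.

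I would resolve this exactly as in Lemma~\ref{Lm:Delta}: take the union bound over the at most $N^m$ realizations of $\{L_k\}$ generated by the halving/doubling in steps~2 and~4, and for each \emph{fixed} realization (hence fixed $\{\alpha_k,A_k,r_k\}$) apply Azuma--Hoeffding to the increments, which are now genuine martingale differences. Two ingredients feed the constant: a uniform bound $\|u^k-x^*\|_2=O(R)$ holding for all $k$ and all realizations, proved by the same induction as \eqref{eq:R_bound} but carried on the momentum iterate $u^k$ (summing the per-iteration recursion for a fixed realization and invoking Lemma~2 of \cite{lan2012validation}); and the light-tail oracle assumption with variance proxy $\tilde c\sigma^2/r_{k+1}$, which gives the Azuma constant $C=\Theta\big(\sqrt{\sum_k \alpha_{k+1}^2R^2\sigma^2/r_{k+1}}\big)$. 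The batch size in step~6 is reverse-engineered precisely so that $C(\sqrt{\ln\alpha^{-1}}+\sqrt{m\ln N})=O(A_N\eps)=O(R^2)$, whence $\tfrac{1}{A_N}\Delta_N^{\mathrm{acc}}=O(\eps)$ with probability $\ge1-\alpha$; together with the first and third terms this gives $f(x^N)-f(x^*)\le\eps$. The only genuinely new point relative to Theorem~\ref{Th:adaptiv_stoch_grad_descent} is re-deriving the $O(R)$ bound and the value of $C$ with the accelerated weights $\alpha_{k+1}$ in place of $1/L_{k+1}$; everything else is the same union-bound plus Azuma--Hoeffding template.
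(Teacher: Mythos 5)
Your proposal is correct and follows essentially the same route as the paper's appendix proof: your potential recursion $\Phi_{k+1}-A_{k+1}f(x^*)\le\Phi_k-A_kf(x^*)+\alpha_{k+1}\la\nabla f(y^{k+1})-g^{k+1},u^k-x^*\ra+\tfrac{\alpha_{k+1}}{2}\eps$ is exactly the paper's Lemma~\ref{lemma_maxmin_3DLST_2} (which the paper derives through the prox-lemma~\ref{lemma_maxmin_2} and the linearization Lemma~\ref{lemma_maxmin_3DLST}, while you expand the Euclidean gradient step directly --- the same computation), and your treatment of the noise sum (union bound over the at most $N^m$ realizations of $\{L_k\}$, Azuma--Hoeffding for each fixed realization, induction giving $\|u^k-x^*\|_2=O(R)$) is precisely the paper's Lemma~\ref{Lm:Delta_fast}, followed by the same $A_N=\Theta(N^2/\bar L)$ growth bound and $T=\tilde O(A_N\sigma_0^2\bar L^2/(\eps\underline L^2))$ accounting. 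The only presentational difference is that you telescope a potential rather than invoking the intermediate lemmas, which changes nothing of substance.
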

We prove the theorem in supplementary materials. 

\subsection{Practical implementation of adaptive algorithms}
Next we comment on applicability of Algorithm \ref{Alg:ASGM} and Algorithm \ref{Alg:Acc} in real problems. Generally, in case when the exact gradients of function $f(x^k)$ is unavailable, 
function values itself of $f(x^k)$ are also unavailable.
It holds, e.g. in stochastic optimization problem, where the objective is presented by its expectation
\begin{equation}\label{eq_expec_func}
    f(x) = \E f(x, \xi).
\end{equation}
In this case we estimate the function as a sample average
\begin{equation}\label{eq:func_value_calc}
    f(x, \{\xi_l\}_{l=1}^r) = \frac{1}{r}\sum_{l=1}^r f(x, \xi_l)
\end{equation}
and use it in adaptive procedures. In this case we interpret $L_k$ as the worst constant among all Lipschitz constants for $f(x, \xi)$ with different realization of $\xi$. Indeed, if $L_{k+1}$ satisfies the following
  \begin{align*}
            f(x^{k+1}, \xi^{k+1}) &\leq f(x^k, \xi^{k+1}) + \la \nabla f(x^k,\xi), x^{k+1} - x^k \ra + L_{k+1}\|x^{k+1} - x^k \|^2_2 + \eps/2.
        \end{align*}
        Then it satisfies 
          \begin{align}\label{eq_adap_proced_repls}
            f(x^{k+1}, \{\xi^{k+1}_l\}_{l=1}^{r_{k+1}}) &\leq f(x^k, \{\xi^{k+1}_l\}_{l=1}^{r_{k+1}})  +\la \nabla^{r_{k+1}} f(x,\{\xi^{k+1}_l\}_{l=1}^{r_{k+1}}), x^{k+1} - x^k \ra \notag \\
            &+ L_{k+1}\|x^{k+1} - x^k \|^2_2 + \eps/2.
        \end{align}
If, e.g, \eqref{eq_expec_func} holds we replace adaptive procedure in the algorithms by \eqref{eq_adap_proced_repls}.

We also comment on batch size. If the batch size $r_k$ decreases during the process of $L_k$ selection, we preserve $r_k$ from the previous iteration in order not to recalculate stochastic approximation $\nabla^{r_{k+1}} f(x\dd{^k}, \{\xi_l^{k+1}\}_{l=1}^{r_{k+1}})$.

All these remarks remain true also in  non-convex case.

\section{Stochastic non-convex optimization}\label{Sect:non_conv}
In this section we assume that   the objective $f$  may be non-convex.  As in the previous section we consider two cases: known and unknown Lipschitz constant $L$.

\subsection{Non-adaptive algorithm}
\begin{algorithm}[ht]
\caption{Non-convex Stochastic Gradient Descent}
\label{Alg:NCSGM}
\begin{algorithmic}[1]
     \REQUIRE Number of iterations $N$, variance $D$, Lipschitz constant $L$, accuracy $\eps$, \dd{starting point $x^0.$}
                \STATE Calculate batch size
$
            r = \max\{\frac{12D}{\eps^2}, ~1\}.
$
        \FOR{$k = 0,\dots,N-1 $}
 \STATE  \vspace{0.1cm}   $
    x^{k+1} = x^k - \frac{1}{2L}\nabla^{r} f(x^k, \{\xi^{k+1}_l\}_{l=1}^{r}).
$\vspace{0.1cm}
                \ENDFOR

        \ENSURE  $ \hat x = \arg\min\limits_{k=1,..N}\|\nabla f(x^k)\|_2$.   
\end{algorithmic}
\end{algorithm}
The next Lemma provides general quite simple inequality which is necessary to prove complexity bounds.

\begin{theorem}\label{Th2}
Algorithm \ref{Alg:NCSGM} with the total number of stochastic gradient oracle calls\footnote{
According to recent works \cite{carmon2017lower,drori2019complexity}, $T$ and $N$ corresponds to lower bounds.} $T = O\left( \frac{\pd{D}L(f(x^0)-f(x^*))}{\eps^4} \right)$
and number of iterations
$N = O\left(\frac{L(f(x^0)-f(x^*))}{\eps^2}\right)$ outputs a point\footnote{ This $\hat x$ is difficult to calculate in practice. Therefore, we refer to the paper \cite{ghadimi2013stochastic},  in which this problem is partially solved.  } $ \hat x^N$ which satisfies
\begin{equation}\label{eq:grad_norm_conv}
     \E\|\nabla f(\hat x^N)\|_2^2 \leq \eps^2.
\end{equation}
\end{theorem}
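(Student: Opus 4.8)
The plan is to reuse the one-step descent estimate of Lemma~\ref{Lm_step_size}, which relies only on $L$-smoothness of $f$ and on the step size $h=\tfrac{1}{2L}$ and never on convexity, and to telescope function values instead of distances to a minimizer. Writing $\delta_{k+1}^2 = \|\nabla^{r} f(x^k,\{\xi^{k+1}_l\}_{l=1}^{r})-\nabla f(x^k)\|_2^2$, Lemma~\ref{Lm_step_size} gives
\[
 f(x^{k+1})-f(x^k)\le -\frac{1}{4L}\|\nabla^{r} f(x^k,\{\xi^{k+1}_l\}_{l=1}^{r})\|_2^2+\frac{1}{2L}\delta_{k+1}^2 .
\]
First I would take the conditional expectation $\E_{x^{k+1}}[\,\cdot\mid x^1,\dots,x^k]$. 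By unbiasedness in \eqref{eq:stoch_or_asmpt2} the cross term vanishes, so $\E\|\nabla^{r} f(x^k,\cdot)\|_2^2=\|\nabla f(x^k)\|_2^2+\E\delta_{k+1}^2$, and the $\tfrac{1}{4L}$ coefficient absorbs half of the noise:
\[
 \E_{x^{k+1}} f(x^{k+1})-f(x^k)\le -\frac{1}{4L}\|\nabla f(x^k)\|_2^2+\frac{1}{4L}\,\E_{x^{k+1}}\delta_{k+1}^2 .
\]
This variance decomposition is the one genuinely non-convex ingredient: it converts the stochastic gradient norm into the deterministic $\|\nabla f(x^k)\|_2^2$ that we actually want to bound.

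Next I would control the noise by mini-batching, $\E_{x^{k+1}}\delta_{k+1}^2\le D/r$, rearrange to isolate the gradient, sum over $k=0,\dots,N-1$, take the total expectation and telescope using $f(x^0)-\E f(x^N)\le f(x^0)-f(x^*)$. Dividing by $N$ yields
\[
 \frac{1}{N}\sum_{k=0}^{N-1}\E\|\nabla f(x^k)\|_2^2 \le \frac{4L\big(f(x^0)-f(x^*)\big)}{N}+\frac{D}{r}.
\]
With the batch size $r=\max\{12D/\eps^2,1\}$ the last term is at most $\eps^2/12$, and choosing $N=O\!\big(L(f(x^0)-f(x^*))/\eps^2\big)$ forces the first term to the same order, so the running average of $\E\|\nabla f(x^k)\|_2^2$ is $O(\eps^2)$.

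Finally, since the reported point $\hat x^N$ has the smallest gradient norm among $x^1,\dots,x^N$, the pointwise inequality $\min_k\|\nabla f(x^k)\|_2^2\le \tfrac1N\sum_k\|\nabla f(x^k)\|_2^2$ together with monotonicity of expectation gives $\E\|\nabla f(\hat x^N)\|_2^2\le \tfrac1N\sum_k\E\|\nabla f(x^k)\|_2^2\le\eps^2$, which is \eqref{eq:grad_norm_conv}. The oracle-call count then follows by multiplying the batch size by the iteration count, $T=N\cdot r=O\!\big(DL(f(x^0)-f(x^*))/\eps^4\big)$.

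The computations are routine; the two points requiring care are the variance decomposition (the cross term vanishes only because $x^k$ is measurable with respect to the conditioning $\sigma$-field and the oracle is unbiased) and the final passage from the averaged guarantee to the $\arg\min$ output. I do not expect the martingale-difference obstruction that complicated the adaptive Algorithm~\ref{Alg:ASGM}: here $L$ and $r$ are deterministic, so after conditioning the telescoping sum of expectations collapses cleanly and no union bound or large-deviation argument is needed.
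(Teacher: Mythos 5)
Your proof is correct, and its skeleton is the same as the paper's: the one-step decrease of Lemma~\ref{Lm_step_size}, noise control via the mini-batch bound $\E\,\delta_{k+1}^2\le D/r$, telescoping of function values, and the final passage from the averaged bound to the $\arg\min$ output via $\min_k\|\nabla f(x^k)\|_2^2\le\tfrac1N\sum_k\|\nabla f(x^k)\|_2^2$ and monotonicity of expectation. The one step you handle differently is how $\|\nabla^{r} f(x^k,\cdot)\|_2^2$ is converted into $\|\nabla f(x^k)\|_2^2$. The paper does this pointwise, through the algebraic inequality $\|a\|_2^2\ge\tfrac12\|b\|_2^2-\|a-b\|_2^2$ in \eqref{eq_lem_func}, which costs a factor $\tfrac12$ on the gradient term and adds an extra noise term, yielding the per-step estimate $-\tfrac{1}{8L}\|\nabla f(x^k)\|_2^2+\tfrac{3}{4L}\delta_{k+1}^2$ (whence the constant $12$ in the batch size). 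You instead take the conditional expectation first and invoke unbiasedness, so that $\E\bigl[\|\nabla^{r} f(x^k,\cdot)\|_2^2\,\big|\,x^k\bigr]=\|\nabla f(x^k)\|_2^2+\E\bigl[\delta_{k+1}^2\,\big|\,x^k\bigr]$ holds as an exact identity, giving the tighter $-\tfrac{1}{4L}\|\nabla f(x^k)\|_2^2+\tfrac{1}{4L}\E\,\delta_{k+1}^2$. Your version is the cleaner calculation and gives better constants; the paper's pointwise inequality has the (here unexploited) advantage that it tolerates a biased or inexact estimator, requiring only that $\delta_{k+1}^2$ be small rather than that the cross term vanish, which fits the paper's broader inexact-oracle discussion. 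Both routes land on the same $N=O\bigl(L(f(x^0)-f(x^*))/\eps^2\bigr)$ and $T=Nr=O\bigl(DL(f(x^0)-f(x^*))/\eps^4\bigr)$, and your closing remark is also right: since $L$ and $r$ are deterministic here, the conditional expectations telescope cleanly and none of the martingale-difference complications of the adaptive Algorithm~\ref{Alg:ASGM} arise.
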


\textit{Proof.}
Due to  $\|a\|^2 \leq 2\|b\|^2 +2\|a-b\|^2$ for any $a,b\in \R^n$ 
we get the following inequality
\begin{align}\label{eq_lem_func}
\|\nabla^{r} f(x^k, &\{\xi^{k+1}_l\}_{l=1}^{r})\|_2^2 \geq \frac{1}{2}\|\nabla f(x^k)\|_2^2  - \|\nabla f(x^k) - \nabla^{r} f(x^k, \{\xi^{k+1}_l\}_{l=1}^{r})\|_2^2.~ ~ ~
\end{align}
In non-convex case Lemma \ref{Lm_step_size} remains true. Using it and \eqref{eq_lem_func} we get (see also \cite{gasnikov2018power}) 
\begin{equation}\label{eq:function_decreas_non_con}
   f(x^{k+1}) - f(x^{k})  \leq  -\frac{1}{8L}\|\nabla f(x^k)\|^2_2 + \frac{3}{4L}\delta^2_{k+1},
   \end{equation}
   where $\delta^2_{k+1} = \|\nabla^{r} f(x^k,\{\xi^{k+1}_l\}_{l=1}^{r})-\nabla f(x^k)\|_2^2$.
   
   If $\E\delta_{k} \leq \frac{\eps^2}{12} $ for any $k$, then to achieve convergence in the norm of the gradient \eqref{eq:grad_norm_conv}, we need to do $N =16L(f(x^0)-f(x^*))/\eps^2$ iterations.\\
  Then we can define batch size from
   \begin{align}\label{eq_to_batch_size_non_con}
      \E\delta^2_{k+1} &=  \E \|\nabla^{r} f(x^k,\{\xi_l^{k+1}\}_{l=1}^r)-\nabla f(x^k)\|_2^2 =  {D}/{ r} \leq {\eps^2}/{12}. 
   \end{align}
   Consequently, $r = \frac{12D}{\eps^2}$.
Summing $r$ over all $N$ iterations we get the total number of oracle calls $T$.
\qed

\subsection{Adaptive algorithm}
\begin{algorithm}[ht]
\caption{Adaptive Non-convex Stochastic Gradient Descent}
\label{Alg:ANCSGM}
\begin{algorithmic}[1]
 \REQUIRE Number of iterations $N$,  $D_0$, accuracy $\eps$, \pd{initial guess $L_0$,} \dd{starting point $x^0$.}
     \STATE Calculate
        $
            r = \max\{\frac{8D_0}{\eps^2}, ~1\}.
     $
        \FOR{$k = 0,\dots,N-1 $}
        \STATE \pd{$L_{k+1}:=\frac{L_k}{4}$.}
        \REPEAT
        \STATE $L_{k+1} := 2L_{k+1}. $ 
 \STATE   \vspace{0.2cm}
  $  x^{k+1} = x^k - \frac{1}{2L_{k+1}}\nabla^{r} f(x^k, \{\xi^{k+1}_l\}_{l=1}^{r}).$
\UNTIL 
       \begin{align}\label{eq:choose_L_non_conv3}
            f(x^{k+1}) &\leq f(x^k) + \la \nabla^{r} f(x^k,\{\xi_l^{k+1}\}_{l=1}^{r}), x^{k+1} - x^k \ra + L_{k+1}\|x^{k+1} - x^k \|^2_2 + \frac{\eps^2}{32L_{k+1}}.
        \end{align}
                \ENDFOR

        \ENSURE  $ \hat x = \arg\min\limits_{k=1,..N}\|\nabla f(x^k)\|_2$.    
\end{algorithmic}
\end{algorithm}

\begin{theorem}\label{Th3}
Algorithm \ref{Alg:ANCSGM} (with line 3 $L_{k+1}:=\min\left\{\frac{L_k}{4},\underline{L}\right\}$ line 5 $L_{k+1}:=\min\left\{2L_{k+1},2\bar{L}\right\}$) with expected number of stochastic gradient oracle calls   $\tilde T = O\left( \frac{ D_0\bar{L}^2(f(x^0)-f(x^N))}{\underline{L}\eps^4} \right)$ and expected number of iterations $\tilde N = O\left(\frac{ \bar{L}^2(f(x^0)-f(x^N))}{\underline{L}\eps^2} \right)$ outputs a point $\hat x^N$ satisfying
\begin{equation*}
   \E\|\nabla f(\hat x^N)\|_2^2 \leq \eps^2.
\end{equation*}
\end{theorem}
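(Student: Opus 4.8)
The plan is to mirror the proof of the non-adaptive non-convex bound (Theorem \ref{Th2}), replacing the fixed $L$ by the iteration-dependent estimate $L_{k+1}$ and carrying the explicit line-search slack. The modified steps 3 and 5 guarantee that every accepted $L_{k+1}$ lies in the deterministic interval $[\underline L, 2\bar L]$, and this two-sided almost-sure control is exactly what lets the argument close in expectation.

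First I would turn the acceptance test \eqref{eq:choose_L_non_conv3} into a one-step progress bound. Substituting the update $x^{k+1}-x^k=-\frac{1}{2L_{k+1}}\nabla^{r} f(x^k,\{\xi^{k+1}_l\}_{l=1}^r)$ into \eqref{eq:choose_L_non_conv3} and collapsing the inner product with the quadratic term gives
\[
f(x^{k+1})\le f(x^k)-\frac{1}{4L_{k+1}}\|\nabla^{r} f(x^k,\{\xi^{k+1}_l\}_{l=1}^r)\|_2^2+\frac{\eps^2}{32L_{k+1}},
\]
the adaptive analogue of Lemma \ref{Lm_step_size}, with the stochastic error term replaced by the deterministic slack the line search is permitted to incur. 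Injecting the variance splitting \eqref{eq_lem_func}, namely $\|\nabla^{r} f(x^k,\cdot)\|_2^2\ge\frac12\|\nabla f(x^k)\|_2^2-\delta^2_{k+1}$ with $\delta^2_{k+1}=\|\nabla^{r} f(x^k,\cdot)-\nabla f(x^k)\|_2^2$, yields the counterpart of \eqref{eq:function_decreas_non_con},
\[
\frac{1}{8L_{k+1}}\|\nabla f(x^k)\|_2^2\le f(x^k)-f(x^{k+1})+\frac{1}{4L_{k+1}}\delta^2_{k+1}+\frac{\eps^2}{32L_{k+1}}.
\]

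Next I would exploit $\underline L\le L_{k+1}\le 2\bar L$: bound $1/L_{k+1}\ge 1/(2\bar L)$ on the left and $1/L_{k+1}\le 1/\underline L$ on the right, sum over $k=0,\dots,N-1$ so the function values telescope to $f(x^0)-f(x^N)$, and take total expectation. The decisive point is that the stochastic contribution enters only through the \emph{nonnegative} quantity $\delta^2_{k+1}$ with the now-\emph{deterministic} coefficient $1/\underline L$; hence $\E[\tfrac{1}{L_{k+1}}\delta^2_{k+1}]\le\tfrac{1}{\underline L}\E\delta^2_{k+1}$ requires only the mini-batch bound $\E\delta^2_{k+1}\le D/r\le D_0/r$ as in \eqref{eq_to_batch_size_non_con}, and no cancellation. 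This is precisely why the martingale-difference obstruction that blocks the convex adaptive analysis (the discussion preceding Theorem \ref{Th:adaptiv_stoch_grad_descent}) is absent here: there is no signed, mean-zero noise whose coefficient would have to be independent of the noise.

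Finally I would convert the averaged estimate into the claim. Since $\hat x^N=\arg\min_{k}\|\nabla f(x^k)\|_2$, we have $\E\|\nabla f(\hat x^N)\|_2^2\le\frac1N\sum_{k}\E\|\nabla f(x^k)\|_2^2$, and the summed inequality produces terms of order $\frac{\bar L(f(x^0)-\E f(x^N))}{N}$ together with a noise/slack term of order $\frac{\bar L}{\underline L}\eps^2$ once $r=\max\{8D_0/\eps^2,1\}$. To reach the target $\eps^2$ I would run the method with an internal tolerance rescaled by $\Theta(\underline L/\bar L)$, which inflates the batch to $r=O(D_0\bar L/(\underline L\eps^2))$ and forces $N=O(\bar L^2(f(x^0)-f(x^N))/(\underline L\eps^2))$, matching $\tilde N$; multiplying $N$ by $r$ and by the amortized line-search cost (the doubling in steps 3 and 5 telescopes to $O(N+\log_2(\bar L/\underline L))$ total inner steps, i.e. $O(1)$ per outer iteration) gives $\tilde T=\tilde O(D_0\bar L^2(f(x^0)-f(x^N))/(\underline L\eps^4))$. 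I expect the main obstacle to be exactly this bookkeeping around the randomness of $L_{k+1}$ and of the number of inner steps, and the careful tracking of how the slack $\eps^2/(32L_{k+1})$ and the batch size combine to yield $\eps^2$ rather than $\frac{\bar L}{\underline L}\eps^2$; the probabilistic core, by contrast, is light because nonnegativity of $\delta^2_{k+1}$ lets the a.s. bounds on $L_{k+1}$ do all the work.
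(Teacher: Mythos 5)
Your mechanics are sound, and your opening is identical to the paper's: from the acceptance test \eqref{eq:choose_L_non_conv3} you obtain the paper's one-step bound \eqref{eq:function_decreas_non_con_adap}, then apply the splitting \eqref{eq_lem_func}, and---exactly as the paper does---the only probabilistic input is that $\delta^2_{k+1}\ge 0$ appears with a deterministic coefficient once $L_{k+1}$ is replaced by its almost-sure bounds $\underline L\le L_{k+1}\le 2\bar L$ (both you and the paper must read the theorem's ``$\min\{L_k/4,\underline L\}$'' in line 3 as a $\max$, otherwise the lower bound fails). After that, the routes genuinely diverge. The paper never sums the gradient norms: it conditions on $\|\nabla f(x^k)\|_2^2\ge\eps^2$, turns \eqref{eq:function_decreas_non_con_adap2} into a per-iteration \emph{expected decrease} of order $\underline L\eps^2/\bar L^2$ (via a case split on the sign of $3\eps^2-8\delta^2_{k+1}$, using $2\bar L$ on the good event and $\underline L$ on the bad one), and concludes that the expected number of iterations before some iterate has $\|\nabla f(x^k)\|_2^2\le\eps^2$ is $\tilde N=O\bigl(\bar L^2(f(x^0)-f(x^*))/(\underline L\eps^2)\bigr)$---all while keeping the algorithm's stated batch $r=8D_0/\eps^2$ and slack $\eps^2/(32L_{k+1})$. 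You instead telescope unconditionally and use the min-over-iterates bound, which with the stated parameters yields only $O\bigl((\bar L/\underline L)\eps^2\bigr)$ accuracy; your repair is to shrink the internal tolerance by $\Theta(\underline L/\bar L)$, inflating the batch to $O\bigl(D_0\bar L/(\underline L\eps^2)\bigr)$ and requiring only $N=O\bigl(\bar L(f(x^0)-f(x^*))/\eps^2\bigr)$ iterations (tighter than the $\tilde N$ you quote, but certainly within it), with the same total $T$. Each route buys something. Yours avoids the paper's sign case split, which is in fact the weakest step of the paper's sketch: on the event $3\eps^2-8\delta^2_{k+1}<0$ the possible increase is controlled only by $1/\underline L$, and the paper's ``minor changes'' do not spell out how that term is absorbed into the claimed $\underline L/\bar L^2$ coefficient, whereas your argument needs no such cancellation. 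The paper's route, on the other hand, proves the claim for Algorithm \ref{Alg:ANCSGM} with the constants literally fixed in the theorem. So the one caveat you should state explicitly is that your proof establishes the bounds of Theorem \ref{Th3} for a \emph{rescaled variant} of the algorithm (batch size and line-search slack depending on $\bar L/\underline L$), not for the stated $r=\max\{8D_0/\eps^2,1\}$ and slack $\eps^2/(32L_{k+1})$; that rescaling is the price of the cleaner, case-free expectation argument.
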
   
   
\textit{Sketch of the Proof}. From \eqref{eq:choose_L_non_conv3} using Lemma \ref{Lm_step_size} we get
\begin{align}\label{eq:function_decreas_non_con_adap}
   f(x^{k+1}) - f(x^{k})  \leq  &-\frac{1}{4L_{k+1}}\|\nabla^r f(x^k, \{\xi_l^{k+1}\}_{l=1}^r)\|^2_2+ {\eps^2}/{(32L_{k+1})}.
   \end{align}
From \eqref{eq_lem_func} and \eqref{eq:function_decreas_non_con_adap} and we have
\begin{align}
   f(x^{k+1}) - f(x^{k})  \leq  &-\frac{1}{8L_{k+1}}\|\nabla f(x^k)\|^2_2 + \frac{1}{4L_{k+1}}\delta^2_{k+1} + {\eps^2}/{(32L_{k+1})},
   \end{align}
    where $\delta^2_{k+1} = \|\nabla^{r} f(x^k,\{\xi^{k+1}_l\}_{l=1}^{r})-\nabla f(x^k)\|_2^2$.
    
   If
   $\|\nabla f(x^k)\|^2_2 \geq \eps^2$. Then 
   \begin{equation}\label{eq:function_decreas_non_con_adap2}
   f(x^{k+1}) - f(x^{k})  \leq  - {(3\eps^2 - 8\delta^2_{k+1})}/{(32L_{k+1})}.
   \end{equation}
   We have that $L_{k+1}\leq 2\bar{L}$.  
   
    If $3\eps^2 - 8\delta^2_{k+1}\geq 0$ we may replace $L_{k+1}$ by $2\bar{L}$. Therefore, we rewrite \eqref{eq:function_decreas_non_con_adap2} with minor changes and after taking the expectation we get
     \begin{equation}\label{eq:function_decreas_non_convex_ada}
  \E f(x^{k+1}) - \E f(x^{k})  \leq  - \underline{L}({2\eps^2 - 8\E \delta^2_{k+1}})/{(64\bar{L}^2)}.
   \end{equation}
      Ensuring $\E\delta^2_{k+1} \leq {\eps^2}/{8} $ we obtain
          \begin{equation}\label{eq:func_ad_final}
  \E f(x^{k+1}) - \E f(x^{k})  \leq  - \underline{L} {\eps^2}/{(64\bar{L}^2)}.
   \end{equation}
   Summing this over expected number of iteration  we get
   \begin{equation}\label{eq:n_iter_non_conv}
  \tilde N ={ 64\bar{L}^2(f(x^0)-f(x^*))}/{\underline{L}\eps^2}.
  \end{equation}
  This $  \tilde N$ ensures that for some $k$ we get    $\|\nabla f(x^k)\|^2_2  \pd{\leq} \eps^2$.
  
     We choose the batch size according to
  \begin{align}\label{eq_to_batch_size_non_con2}
      \E\delta^2_{k+1} &=  \E \|\nabla^{r} f(x^k,\{\xi^{k+1}_l\}_{l=1}^{r})-\nabla f(x^k)\|_2^2= {\eps^2}/{8} \leq {D_0}/{r}.
  \end{align} 
      Consequently, $r = \frac{8D_0}{\eps^2}.$
Using the expected number of algorithm iterations \eqref{eq:n_iter_non_conv} we get expected number of oracle calls 
\begin{equation}
   \tilde  T = \tilde Nr ={ 512D_0\bar{L}^2(f(x^0)-f(x^N))}/{\underline{L}\eps^4}.
\end{equation}
\qed

\section{Experiments}\label{Sect:exper}
We perform experiments using proposed methods with and without acceleration\footnote{In practice we use slightly different  rule in line 2: $L_{k+1}:=L_k/4$ and simpler formula for batch size $r_{k+1}$ -- without constants $\underline{L}$ and $\bar{L}$.} on convex and non-convex problems and compare results with commonly used methods --- Adam, \cite{adam} and Adagrad, \cite{duchi2011adaptive}. We trained logistic regression, two-layer sigmoid-activated and ReLU-activated fully-connected networks on MNIST \cite{lecun-mnisthandwrittendigit-2010} and CNN with three filters and three fully-connected layers on CIFAR10 \cite{cifar10}. Objective for all the problems is cross-entropy function between predicted class distribution and ground-truth class. Hyperparameters for Alg. 2, 3 were $D_0 = 0.01, \eps = 10^{-5}, L_k = 100$, and $D_0 = 0.1, L_0 = 1, \eps = 0.002$ for Alg 5. Adam and Adagrad had batch size equals to 128, learning rate $= 0.001$ and $\beta_1 = 0.9, \beta_2 = 0.999$ --- these parameters are frequently used in various machine learning tasks and are used in \cite{adam}. Dynamics of objective function value on training set and testing accuracy for every task are depicted on Fig~\ref{fig:objective}. Since our tasks come from machine learning domain we measure not only objective, but also accuracy on test set Fig~\ref{fig:test}. We also investigate convergence by epochs and sensibility to starting point and hyperparameters on logistic regression and fully connected network. We fix 5 starting points and exponential hyperparameter grids. For our methods the grid was $D_0 = [0.1, 0.01, 0.001, 0.0001]$, $\eps = [0.01, 0.001, 0.0001, 0.00001]$, $L_0 = [1000, 10000]$, min L (minimal cut off for Lipshitz constant for more stable convergence, but it is not necessary) $= [101, 11, 2]$; for Adam and Adagrad we use $lr = [0.00001, 0.0001, 0.001, 0.01, 0.1]$ and batch size $= [32, 64, 128, 256, 512, 1024]$. The procedure is follows. We fix hyperparameters and average all runs by starting point. Then we compute median for each epoch by all hyperparameters (median is used to avoid outliers caused by bad sets of hyperparameters). So, this analysis gives us picture of how algorithms perform in average (by starting points and hyperparameters). Results of the analysis are summarized on Fig~\ref{fig:epochs} and Fig~\ref{fig:epochs_test} for objective and testing accuracy correspondingly. One can see that proposed methods are very robust to hyperparameters set and can be used for wide range of tasks and settings. The code for all proposed methods is available, visit \url{https://github.com/alexo256/Adaptive-Gradient-Descent-for-Convex-and-Non-Convex-Stochastic-Optimization}.

 \begin{figure}[ht]
    \begin{center}
\includegraphics[width=\linewidth]{./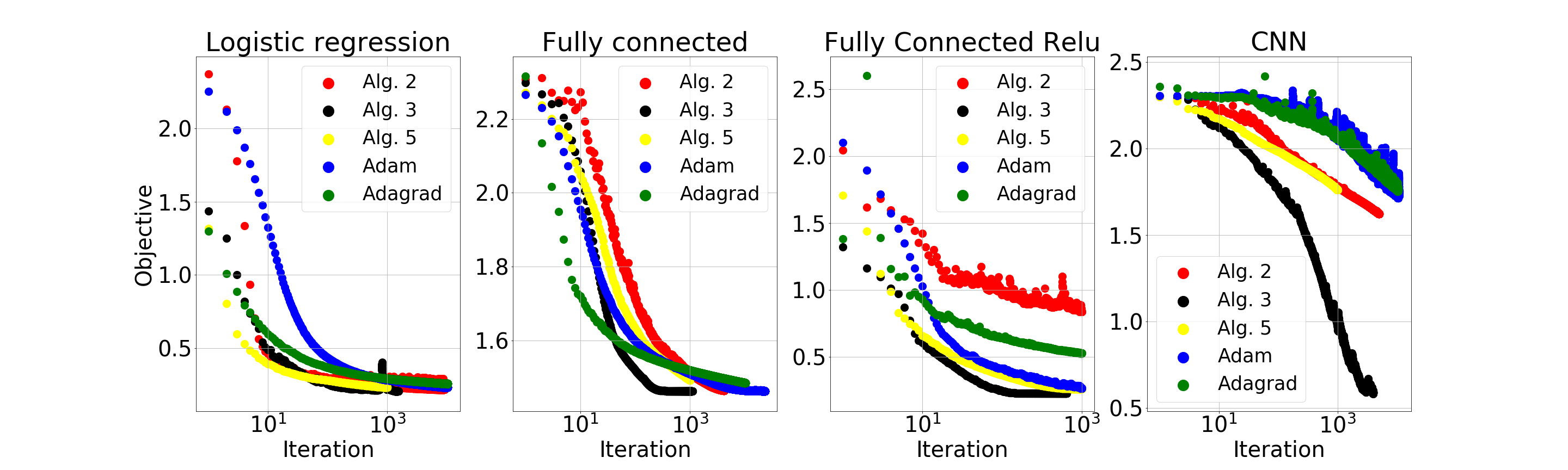}
  \caption{Objective by iteration}
  \label{fig:objective}
    \end{center}
\end{figure}
\begin{figure}[ht]
    \begin{center}
\includegraphics[width=\linewidth]{./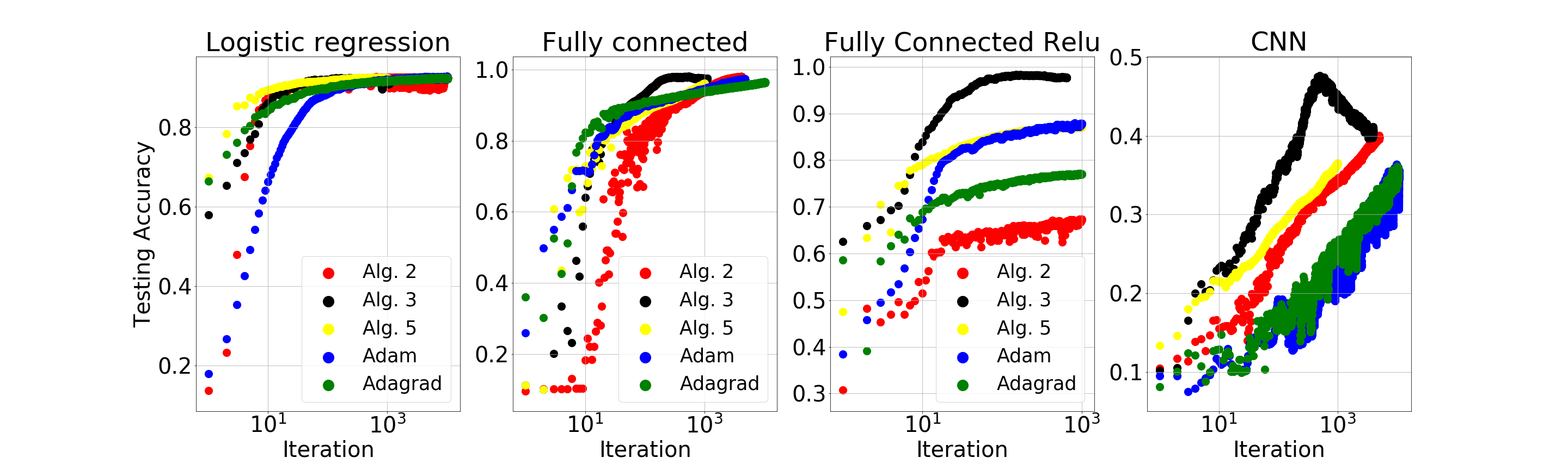}
  \caption{Testing accuracy by iteration}
  \label{fig:test}
    \end{center}
\end{figure}
\begin{figure}[ht]
    \begin{center}
\includegraphics[width=0.9\linewidth]{./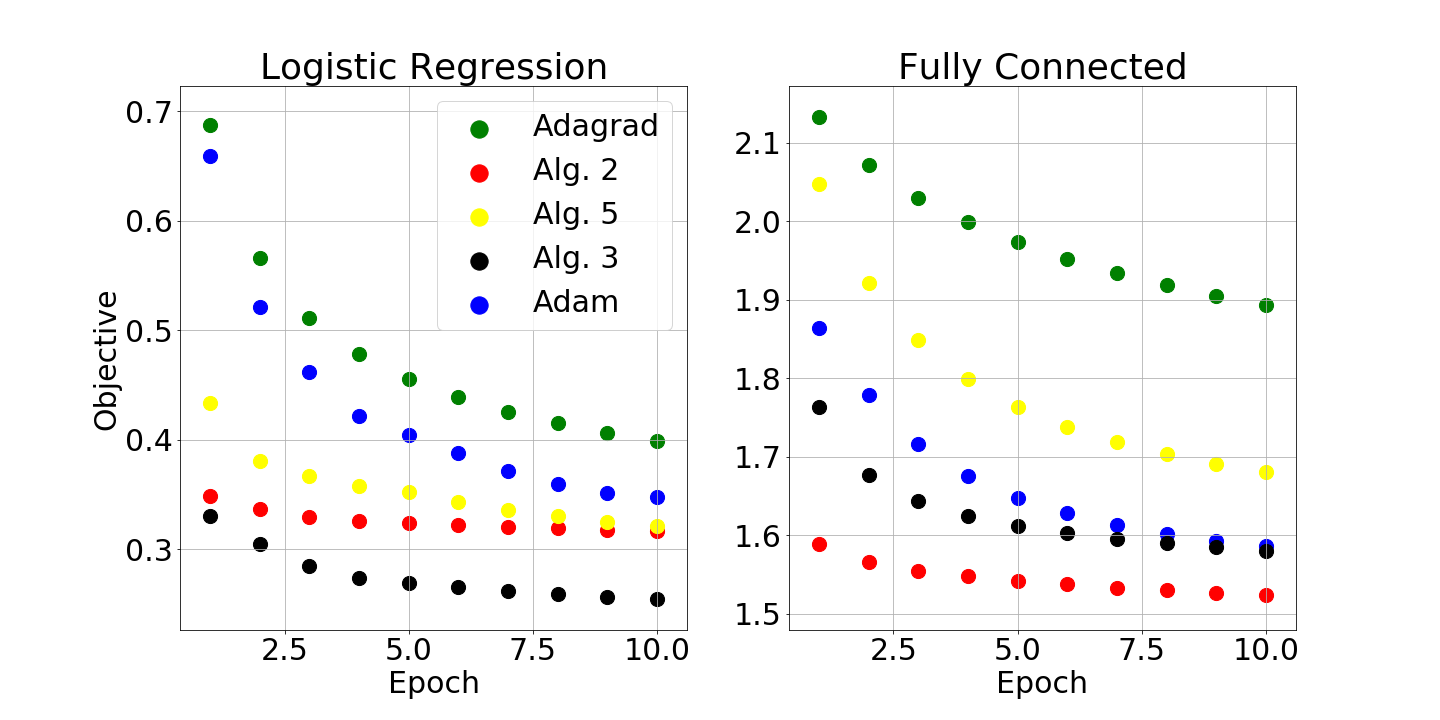}
  \caption{Averaged objective by epoch}
  \label{fig:epochs}
    \end{center}
\end{figure}
\begin{figure}[ht]
    \begin{center}
\includegraphics[width=0.9\linewidth]{./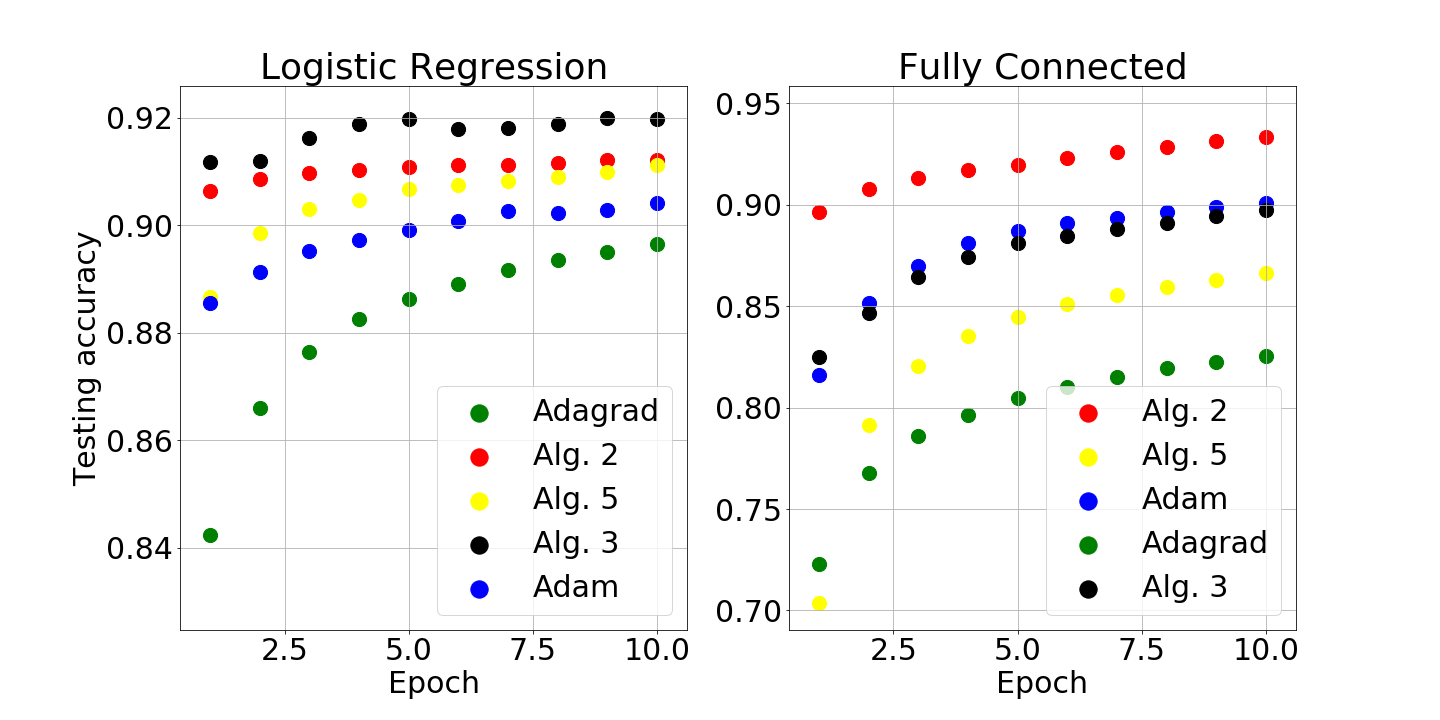}
  \caption{Averaged testing accuracy by epoch}
  \label{fig:epochs_test}
    \end{center}
\end{figure}

\section*{Acknowledgements}
The research of A. Ogaltsov, P. Dvurechensky and V. Spokoiny was partially supported by Huawei. The research of A. Gasnikov was partially supported by Russian Science Foundation project 18-71-00048 mol-a-ved and by Yahoo! Research Faculty Engagement Program. Sect.\ref{Sect:exper} was prepared within the framework of the HSE University Basic Research Program and funded by the Russian Academic Excellence Project `5-100’.

\small
\bibliography{ref}

\begin{thebibliography}{10}

\bibitem{bach2019universal}
F.~Bach and K.~Y. Levy.
\newblock A universal algorithm for variational inequalities adaptive to
  smoothness and noise.
\newblock In A.~Beygelzimer and D.~Hsu, editors, {\em Proceedings of the
  Thirty-Second Conference on Learning Theory}, volume~99 of {\em Proceedings
  of Machine Learning Research}, pages 164--194, Phoenix, USA, 25--28 Jun 2019.
  PMLR.
\newblock arXiv:1902.01637.

\bibitem{bogolubsky2016learning}
L.~Bogolubsky, P.~Dvurechensky, A.~Gasnikov, G.~Gusev, Y.~Nesterov, A.~M.
  Raigorodskii, A.~Tikhonov, and M.~Zhukovskii.
\newblock Learning supervised pagerank with gradient-based and gradient-free
  optimization methods.
\newblock In D.~D. Lee, M.~Sugiyama, U.~V. Luxburg, I.~Guyon, and R.~Garnett,
  editors, {\em Advances in Neural Information Processing Systems 29}, pages
  4914--4922. Curran Associates, Inc., 2016.
\newblock arXiv:1603.00717.

\bibitem{byrd2012sample}
R.~H. Byrd, G.~M. Chin, J.~Nocedal, and Y.~Wu.
\newblock Sample size selection in optimization methods for machine learning.
\newblock {\em Mathematical Programming}, 134(1):127--155, 2012.

\bibitem{carmon2017lower}
Y.~Carmon, J.~C. Duchi, O.~Hinder, and A.~Sidford.
\newblock Lower bounds for finding stationary points ii: First-order methods.
\newblock {\em arXiv preprint arXiv:1711.00841}, 2017.

\bibitem{deng2018optimal}
Q.~Deng, Y.~Cheng, and G.~Lan.
\newblock Optimal adaptive and accelerated stochastic gradient descent.
\newblock {\em arXiv:1810.00553}, 2018.

\bibitem{devolder2014first}
O.~Devolder, F.~Glineur, and Y.~Nesterov.
\newblock First-order methods of smooth convex optimization with inexact
  oracle.
\newblock {\em Mathematical Programming}, 146(1):37--75, 2014.

\bibitem{drori2019complexity}
Y.~Drori and O.~Shamir.
\newblock The complexity of finding stationary points with stochastic gradient
  descent.
\newblock {\em arXiv preprint arXiv:1910.01845}, 2019.

\bibitem{duchi2011adaptive}
J.~Duchi, E.~Hazan, and Y.~Singer.
\newblock Adaptive subgradient methods for online learning and stochastic
  optimization.
\newblock {\em Journal of Machine Learning Research}, 12(Jul.):2121--2159,
  2011.

\bibitem{dvinskikh2019dual}
D.~Dvinskikh, E.~Gorbunov, A.~Gasnikov, P.~Dvurechensky, and C.~A. Uribe.
\newblock On dual approach for distributed stochastic convex optimization over
  networks.
\newblock {\em arXiv preprint arXiv:1903.09844}, 2019.

\bibitem{dvurechensky2018computational}
P.~Dvurechensky, A.~Gasnikov, and A.~Kroshnin.
\newblock Computational optimal transport: Complexity by accelerated gradient
  descent is better than by {S}inkhorn's algorithm.
\newblock In J.~Dy and A.~Krause, editors, {\em Proceedings of the 35th
  International Conference on Machine Learning}, volume~80 of {\em Proceedings
  of Machine Learning Research}, pages 1367--1376, 2018.
\newblock arXiv:1802.04367.

\bibitem{friedlander2012hybrid}
M.~P. Friedlander and M.~Schmidt.
\newblock Hybrid deterministic-stochastic methods for data fitting.
\newblock {\em SIAM Journal on Scientific Computing}, 34(3):A1380--A1405, 2012.

\bibitem{gasnikov2018power}
A.~V. Gasnikov, P.~Dvurechenskii, M.~E. Zhukovskii, S.~V. Kim, S.~S. Plaunov,
  D.~A. Smirnov, and F.~A. Noskov.
\newblock About the power law of the pagerank vector distribution. part 2.
  backley--osthus model, power law verification for this model and setup of
  real search engines.
\newblock {\em Sibirskii Zhurnal Vychislitel'noi Matematiki}, 21(1):23--45,
  2018.

\bibitem{gasnikov2016stochasticInter}
A.~V. Gasnikov and P.~E. Dvurechensky.
\newblock Stochastic intermediate gradient method for convex optimization
  problems.
\newblock {\em Doklady Mathematics}, 93(2):148--151, Mar 2016.

\bibitem{gazagnadou2019optimal}
N.~Gazagnadou, R.~M. Gower, and J.~Salmon.
\newblock Optimal mini-batch and step sizes for saga.
\newblock {\em arXiv preprint arXiv:1902.00071}, 2019.

\bibitem{ghadimi2013stochastic}
S.~Ghadimi and G.~Lan.
\newblock Stochastic first-and zeroth-order methods for nonconvex stochastic
  programming.
\newblock {\em SIAM Journal on Optimization}, 23(4):2341--2368, 2013.

\bibitem{gine2016mathematical}
E.~Gin{\'e} and R.~Nickl.
\newblock {\em Mathematical foundations of infinite-dimensional statistical
  models}, volume~40.
\newblock Cambridge University Press, 2016.

\bibitem{gorbunov2019optimal}
E.~Gorbunov, D.~Dvinskikh, and A.~Gasnikov.
\newblock Optimal decentralized distributed algorithms for stochastic convex
  optimization.
\newblock {\em arXiv preprint arXiv:1911.07363}, 2019.

\bibitem{iusem2017extragradient}
A.~N. Iusem, A.~Jofr{\'e}, R.~I. Oliveira, and P.~Thompson.
\newblock Extragradient method with variance reduction for stochastic
  variational inequalities.
\newblock {\em SIAM Journal on Optimization}, 27(2):686--724, 2017.

\bibitem{iusem2019variance}
A.~N. Iusem, A.~Jofré, R.~I. Oliveira, and P.~Thompson.
\newblock Variance-based extragradient methods with line search for stochastic
  variational inequalities.
\newblock {\em SIAM Journal on Optimization}, 29(1):175--206, 2019.
\newblock arXiv:1703.00262.

\bibitem{kavis2019unixgrad}
A.~Kavis, K.~Y. Levy, F.~Bach, and V.~Cevher.
\newblock Unixgrad: A universal, adaptive algorithm with optimal guarantees for
  constrained optimization.
\newblock In {\em Advances in Neural Information Processing Systems}, pages
  6257--6266, 2019.

\bibitem{adam}
D.~Kingma and J.~Ba.
\newblock Adam: a method for stochastic optimization.
\newblock {\em ICLR}, 2015.

\bibitem{cifar10}
A.~Krizhevsky.
\newblock Learning multiple layers of features from tiny images. phd thesis.
\newblock Technical report, University of Toronto, 2009.

\bibitem{lan2012validation}
G.~Lan, A.~Nemirovski, and A.~Shapiro.
\newblock Validation analysis of mirror descent stochastic approximation
  method.
\newblock {\em Mathematical Programming}, 134(2):425--458, 2012.

\bibitem{lecun-mnisthandwrittendigit-2010}
Y.~Lecun, L.~Bottou, Y.~Bengio, and P.~Haffner.
\newblock Gradient-based learning applied to document recognition.
\newblock In {\em Proceedings of the IEEE}, pages 2278--2324, 1998.

\bibitem{levy2018online}
K.~Y. Levy, A.~Yurtsever, and V.~Cevher.
\newblock Online adaptive methods, universality and acceleration.
\newblock In S.~Bengio, H.~Wallach, H.~Larochelle, K.~Grauman, N.~Cesa-Bianchi,
  and R.~Garnett, editors, {\em Advances in Neural Information Processing
  Systems 31}, pages 6500--6509. Curran Associates, Inc., 2018.
\newblock arXiv:1809.02864.

\bibitem{malitsky2018first-order}
Y.~Malitsky and T.~Pock.
\newblock A first-order primal-dual algorithm with linesearch.
\newblock {\em SIAM Journal on Optimization}, 28(1):411--432, 2018.

\bibitem{nemirovski1982orth}
A.~Nemirovski.
\newblock Orth-method for smooth convex optimization.
\newblock {\em Izvestia AN SSSR, Transl.: Eng. Cybern. Soviet J. Comput. Syst.
  Sci}, 2:937--947, 1982.

\bibitem{nemirovski2009robust}
A.~Nemirovski, A.~Juditsky, G.~Lan, and A.~Shapiro.
\newblock Robust stochastic approximation approach to stochastic programming.
\newblock {\em SIAM Journal on Optimization}, 19(4):1574--1609, 2009.

\bibitem{nesterov2013gradient}
Y.~Nesterov.
\newblock Gradient methods for minimizing composite functions.
\newblock {\em Mathematical Programming}, 140(1):125--161, 2013.
\newblock First appeared in 2007 as CORE discussion paper 2007/76.

\bibitem{nesterov2015universal}
Y.~Nesterov.
\newblock Universal gradient methods for convex optimization problems.
\newblock {\em Mathematical Programming}, 152(1):381--404, 2015.

\bibitem{nesterov2018lectures}
Y.~Nesterov.
\newblock {\em Lectures on convex optimization}, volume 137.
\newblock Springer International Publishing, 2018.

\bibitem{nesterov2018primal-dual}
Y.~Nesterov, A.~Gasnikov, S.~Guminov, and P.~Dvurechensky.
\newblock Primal-dual accelerated gradient methods with small-dimensional
  relaxation oracle.
\newblock {\em arXiv:1809.05895}, 2018.

\bibitem{newton2018stochastic}
D.~Newton, F.~Yousefian, and R.~Pasupathy.
\newblock {\em Stochastic Gradient Descent: Recent Trends}, chapter~9, pages
  193--220.
\newblock INFORMS, 2018.

\bibitem{ogaltsov2019heuristic}
A.~Ogaltsov and A.~Tyurin.
\newblock Heuristic adaptive fast gradient method in stochastic optimization
  tasks.
\newblock {\em arXiv:1910.04825}, 2019.

\bibitem{polyak1987introduction}
B.~Polyak.
\newblock {\em Introduction to Optimization}.
\newblock New York, Optimization Software, 1987.

\bibitem{ruder2016overview}
S.~Ruder.
\newblock An overview of gradient descent optimization algorithms.
\newblock {\em arXiv:1609.04747}, 2016.

\bibitem{ward19adagrad}
R.~Ward, X.~Wu, and L.~Bottou.
\newblock {A}da{G}rad stepsizes: Sharp convergence over nonconvex landscapes.
\newblock In K.~Chaudhuri and R.~Salakhutdinov, editors, {\em Proceedings of
  the 36th International Conference on Machine Learning}, volume~97 of {\em
  Proceedings of Machine Learning Research}, pages 6677--6686, Long Beach,
  California, USA, 09--15 Jun 2019. PMLR.

\bibitem{woodworth2018graph}
B.~E. Woodworth, J.~Wang, A.~Smith, B.~McMahan, and N.~Srebro.
\newblock Graph oracle models, lower bounds, and gaps for parallel stochastic
  optimization.
\newblock In {\em Advances in neural information processing systems}, pages
  8496--8506, 2018.

\bibitem{yurtsever2015universal}
A.~Yurtsever, Q.~Tran-Dinh, and V.~Cevher.
\newblock A universal primal-dual convex optimization framework.
\newblock In {\em Proceedings of the 28th International Conference on Neural
  Information Processing Systems}, NIPS'15, pages 3150--3158, Cambridge, MA,
  USA, 2015. MIT Press.

\end{thebibliography}
\bibliographystyle{abbrv}

 \newpage 
 \appendix
 \onecolumn
 \section{Appendix}

\subsection{Accelerated adaptive algorithm}

In order to prove the main result we have to prove the following lemmas.

\begin{lemma}
	Let $\psi(x)$ be a convex function, and 
	\begin{equation*}
	y = {\arg\min_{x \in \R^n}}\{\psi(x) + \frac{\beta}{2} \|x - z\|_2^2\}.
	\end{equation*}
	Then
	\begin{equation*}
	\psi(x) + \frac{\beta}{2} \|x - z\|_2^2 \geq \psi(y) + \frac{\beta}{2} \|y - z\|_2^2 + \frac{\beta}{2} \|x - y\|_2^2 ,\,\,\, \forall x \in \mathbb{R}^n.
	\end{equation*}
	\label{lemma_maxmin_2}
\end{lemma}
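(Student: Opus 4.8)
The plan is to recognize that the objective $\phi(x) := \psi(x) + \frac{\beta}{2}\|x-z\|_2^2$ is $\beta$-strongly convex, so the claimed inequality is precisely the quadratic lower bound that strong convexity provides at a minimizer. First I would write down the first-order optimality condition for the (convex) minimization problem defining $y$: since $y$ minimizes $\phi$, there is a subgradient $g \in \partial\psi(y)$ with $g + \beta(y-z)=0$, that is, $g = \beta(z-y)$.

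Next I would invoke convexity of $\psi$ at the point $y$ with this particular subgradient, obtaining for every $x$
\begin{equation*}
\psi(x) \geq \psi(y) + \la g, x-y\ra = \psi(y) + \beta\la z-y,\, x-y\ra.
\end{equation*}
Adding $\frac{\beta}{2}\|x-z\|_2^2$ to both sides reduces the lemma to the purely algebraic identity
\begin{equation*}
\beta\la z-y,\, x-y\ra + \frac{\beta}{2}\|x-z\|_2^2 = \frac{\beta}{2}\|y-z\|_2^2 + \frac{\beta}{2}\|x-y\|_2^2,
\end{equation*}
which I would verify by setting $a=x-y$ and $b=y-z$, so that $x-z=a+b$, and expanding $\|a+b\|_2^2 = \|a\|_2^2 + 2\la a,b\ra + \|b\|_2^2$; the cross terms $\mp\beta\la a,b\ra$ then cancel and the two squared norms $\frac{\beta}{2}\|a\|_2^2$ and $\frac{\beta}{2}\|b\|_2^2$ remain, matching the right-hand side.

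Since every step is either an equality or a one-line consequence of convexity, there is no substantial obstacle here. The only point requiring care is the optimality condition when $\psi$ is merely convex rather than differentiable, where I would use the subdifferential characterization $0\in\partial\phi(y)$ in place of $\nabla\phi(y)=0$; if $\psi$ is assumed smooth this collapses to the gradient condition and the argument is identical. The resulting inequality is exactly the three-point (prox) inequality that the accelerated method in the appendix needs for telescoping the iterate distances.
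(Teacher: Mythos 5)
Your proof is correct and follows essentially the same route the paper indicates: the first-order optimality condition $0\in\partial\psi(y)+\beta(y-z)$ combined with the $\beta$-strong convexity of $\psi(x)+\frac{\beta}{2}\|x-z\|_2^2$, which you have merely unpacked into the convexity inequality for $\psi$ plus the elementary identity $\beta\la z-y,x-y\ra+\frac{\beta}{2}\|x-z\|_2^2=\frac{\beta}{2}\|y-z\|_2^2+\frac{\beta}{2}\|x-y\|_2^2$. Your care with the subdifferential (rather than assuming $\psi$ differentiable) is appropriate, since the paper only assumes $\psi$ convex.
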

The lemma can be proved using optimality condition and $\beta$--strong convexity of the optimized function. Here and after we simplify formula $\nabla^{r} f(\cdot, \{\xi^{k+1}_l\}_{l=1}^{r})$ as $\nabla^{r} f(\cdot)$. Let us denote $l_f(x;y) = f(y) + \langle \nabla^{r_{k+1}} f(y), x - y \rangle$.
\begin{lemma}
	For all $x \in \mathbb{R}^n$
	\begin{align*}
		&\hphantom{{}={}}l_f(x^{k+1};y^{k+1})  + \frac{L_{k+1}}{2}\|{x^{k+1} - y^{k+1}}\|_2^2 \\&\leq \frac{A_k}{A_{k+1}}l_f(x^k;y^{k+1}) +
			 \frac{\alpha_{k+1}}{A_{k+1}}\Bigl(l_f(x;y^{k+1}) 
			 + \frac{1}{2\alpha_{k+1}}\|{x - u^{k}}\|_2^2 - \frac{1}{2\alpha_{k+1}}\|x - u^{k+1}\|_2^2\Bigl)
	\end{align*}
	\label{lemma_maxmin_3DLST}
\end{lemma}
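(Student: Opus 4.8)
The plan is to prove Lemma~\ref{lemma_maxmin_3DLST} by combining the previous Lemma~\ref{lemma_maxmin_2} (a three-point inequality for a proximal step) with the specific update rules of Algorithm~\ref{Alg:Acc}, exploiting that $u^{k+1}$ is exactly the minimizer of a regularized linear model. The key observation is that step~8 of the algorithm, $u^{k+1} = u^k - \alpha_{k+1}\nabla^{r_{k+1}} f(y^{k+1})$, is precisely the optimality condition for
\[
u^{k+1} = \arg\min_{x\in\R^n}\left\{\alpha_{k+1}\la \nabla^{r_{k+1}} f(y^{k+1}), x\ra + \frac{1}{2}\|x-u^k\|_2^2\right\}.
\]
Applying Lemma~\ref{lemma_maxmin_2} with $\psi(x)=\alpha_{k+1}\la \nabla^{r_{k+1}} f(y^{k+1}),x\ra$, $z=u^k$, $\beta=1$, and $y=u^{k+1}$ immediately yields, for every $x$,
\[
\alpha_{k+1}\la \nabla^{r_{k+1}} f(y^{k+1}), x - u^{k+1}\ra + \tfrac{1}{2}\|x-u^k\|_2^2 \geq \tfrac{1}{2}\|u^{k+1}-u^k\|_2^2 + \tfrac{1}{2}\|x-u^{k+1}\|_2^2.
\]

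\textbf{Next I would translate the linear model $l_f$ through the algorithm's convex-combination structure.} Since $l_f(\cdot;y^{k+1})$ is affine in its first argument and $x^{k+1}=(\alpha_{k+1}u^{k+1}+A_kx^k)/A_{k+1}$ with $A_{k+1}=A_k+\alpha_{k+1}$, linearity gives exactly
\[
l_f(x^{k+1};y^{k+1}) = \frac{A_k}{A_{k+1}}\,l_f(x^k;y^{k+1}) + \frac{\alpha_{k+1}}{A_{k+1}}\,l_f(u^{k+1};y^{k+1}).
\]
The remaining task is to bound the $\frac{L_{k+1}}{2}\|x^{k+1}-y^{k+1}\|_2^2$ term and to rewrite $l_f(u^{k+1};y^{k+1})$ in terms of $l_f(x;y^{k+1})$ plus the squared-distance differences. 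From $y^{k+1}=(\alpha_{k+1}u^k+A_kx^k)/A_{k+1}$ one computes $x^{k+1}-y^{k+1}=\frac{\alpha_{k+1}}{A_{k+1}}(u^{k+1}-u^k)$, so that $\frac{L_{k+1}}{2}\|x^{k+1}-y^{k+1}\|_2^2 = \frac{L_{k+1}\alpha_{k+1}^2}{2A_{k+1}^2}\|u^{k+1}-u^k\|_2^2$. The defining quadratic relation $A_{k+1}=\alpha_{k+1}^2 L_{k+1}$ (equivalent to the formula for $\alpha_{k+1}$ in step~5) then simplifies this coefficient to $\frac{\alpha_{k+1}}{2A_{k+1}}$, matching the scale of the term produced by the three-point inequality.

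\textbf{Assembling the pieces} amounts to writing $l_f(u^{k+1};y^{k+1}) = l_f(x;y^{k+1}) + \la \nabla^{r_{k+1}} f(y^{k+1}), u^{k+1}-x\ra$, substituting the bound on $\alpha_{k+1}\la\nabla^{r_{k+1}}f(y^{k+1}),x-u^{k+1}\ra$ from the three-point inequality, and checking that the $\frac12\|u^{k+1}-u^k\|_2^2$ term from Lemma~\ref{lemma_maxmin_2} exactly cancels the $\frac{L_{k+1}\alpha_{k+1}^2}{2A_{k+1}^2}\|u^{k+1}-u^k\|_2^2$ contribution after multiplying through by $\frac{\alpha_{k+1}}{A_{k+1}}$ and using $A_{k+1}=\alpha_{k+1}^2L_{k+1}$. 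I expect the bookkeeping of these coefficients to be the main obstacle: the cancellation is exact only when the relation $A_{k+1}=\alpha_{k+1}^2 L_{k+1}$ holds, so the crux is verifying that the step~5 formula indeed enforces this identity and that every $\alpha_{k+1}/A_{k+1}$ factor lands on the correct term. Once the coefficient matching is confirmed, the stated inequality follows directly.
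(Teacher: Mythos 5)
Your proposal is correct and follows essentially the same route as the paper's proof: both expand $x^{k+1}$ as the convex combination $(\alpha_{k+1}u^{k+1}+A_kx^k)/A_{k+1}$ and use linearity of $l_f$, relate $x^{k+1}-y^{k+1}=\frac{\alpha_{k+1}}{A_{k+1}}(u^{k+1}-u^k)$ via step 7, exploit $A_{k+1}=L_{k+1}\alpha_{k+1}^2$ (which step 5 does enforce, since $\alpha_{k+1}$ is the positive root of $L_{k+1}\alpha^2-\alpha-A_k=0$), and apply Lemma \ref{lemma_maxmin_2} to the proximal reformulation of step 8. The only blemish is a coefficient slip in your prose: $\frac{L_{k+1}\alpha_{k+1}^2}{2A_{k+1}^2}$ simplifies to $\frac{1}{2A_{k+1}}$, not $\frac{\alpha_{k+1}}{2A_{k+1}}$, and it is precisely this $\frac{1}{2A_{k+1}}\|u^{k+1}-u^k\|_2^2$ that cancels the corresponding term coming out of the three-point inequality, so the assembly goes through exactly as you describe.
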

\begin{proof}
	\begin{align*}
	&\hphantom{{}={}}l_f(x^{k+1};y^{k+1})  + \frac{L_{k+1}}{2}\norm{x^{k+1} - y^{k+1}}^2 \\
	&\eqarg{(step 9)}l_f\left(\frac{\alpha_{k+1}u^{k+1} + A_k x^k}{A_{k+1}};y^{k+1}\right)  + \frac{L_{k+1}}{2}\norm{\frac{\alpha_{k+1}u^{k+1} + A_k x^k}{A_{k+1}} - y^{k+1}}^2\\
	&\eqarg{(step 7)} f(y^{k+1}) + \frac{\alpha_{k+1}}{A_{k+1}}\langle \nabla^{r_{k+1}} f(y^{k+1}), u^{k+1} - y^{k+1} \rangle\ \\
	&\hphantom{{}={}}+\frac{A_k}{A_{k+1}}\langle \nabla^{r_{k+1}} f(y^{k+1}), x^k - y^{k+1} \rangle  + \frac{L_{k+1} \alpha^2_{k+1}}{2 A^2_{k+1}}\norm{u^{k+1} - u^k}^2 \\
	 &=\frac{A_k}{A_{k+1}}\left(f(y^{k+1}) + \langle \nabla^{r_{k+1}} f(y^{k+1}), x^k - y^{k+1} \rangle\right)
	 \\&\hphantom{{}={}}+
	 \frac{\alpha_{k+1}}{A_{k+1}}\left(f(y^{k+1}) + 
	 \langle \nabla^{r_{k+1}} f(y^{k+1}), u^{k+1} - y^{k+1} \rangle\right)+\frac{L_{k+1} \alpha^2_{k+1}}{2 A^2_{k+1}}\norm{u^{k+1} - u^k}^2\\ &=_{{\circled{1}}}
	 \frac{A_k}{A_{k+1}}l_f(x^k;y^{k+1})+\frac{\alpha_{k+1}}{A_{k+1}}\left(l_f(u^{k+1};y^{k+1})
	 + \frac{1}{2 \alpha_{k+1}}\norm{u^{k+1} - u^k}^2\right) \\&\leq_{{ \circled{2}}}
	 \frac{A_k}{A_{k+1}}l_f(x^k;y^{k+1}) +
	 \frac{\alpha_{k+1}}{A_{k+1}}\left(l_f(x;y^{k+1})
	 + \frac{1}{2\alpha_{k+1}}\norm{x - u^k}^2 - \frac{1}{2\alpha_{k+1}}\norm{x - u^{k+1}}^2\right).
	\end{align*}

{\small \circled{1}} -- from $A_{k+1} = L_{k+1}\alpha^2_{k+1}$ (step 5).

{\small \circled{2}} -- Lemma \ref{lemma_maxmin_2}. 
Note that we can rewrite step 8 as $$u^{k+1} = {\arg\min_{x \in \mathbb{R}^n}}\{l_f(x;y^{k+1}) + \frac{1}{2\alpha_{k+1}} \norm{x - u^k}^2\}.$$
\end{proof}

\begin{lemma}
	For all $x \in \mathbb{R}^n$,
	\begin{align*}
		&\hphantom{{}={}}A_{k+1} f(x^{k+1}) - A_{k} f(x^{k}) + \frac{1}{2} \norm{x - u^{k+1}}^2 - \frac{1}{2} \norm{x - u^{k}}^2 \\&\leq \alpha_{k+1}f(x) + \frac{\alpha_{k+1}\varepsilon}{2}+ \alpha_{k+1}\langle \nabla^{r_{k+1}} f(y^{k+1})-\nabla f(y^{k+1}), x - u^k \rangle.
	\end{align*}
	\label{lemma_maxmin_3DLST_2}
\end{lemma}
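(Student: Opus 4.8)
The plan is to chain the one-step descent inequality of Lemma~\ref{lemma_maxmin_3DLST} with the line-search stopping criterion \eqref{eq:choose_L_non_conv} and the convexity of $f$, isolating the stochastic gradient error into a single clean inner product. First I would multiply the inequality of Lemma~\ref{lemma_maxmin_3DLST} through by $A_{k+1}>0$, which (using $\tfrac{\alpha_{k+1}}{A_{k+1}}\cdot\tfrac{1}{2\alpha_{k+1}}=\tfrac{1}{2A_{k+1}}$) turns it into
\begin{align*}
A_{k+1}\Bigl(l_f(x^{k+1};y^{k+1}) + \tfrac{L_{k+1}}{2}\norm{x^{k+1}-y^{k+1}}^2\Bigr) &\leq A_k\, l_f(x^k;y^{k+1}) + \alpha_{k+1} l_f(x;y^{k+1}) \\
&\quad + \tfrac{1}{2}\norm{x-u^k}^2 - \tfrac{1}{2}\norm{x-u^{k+1}}^2.
\end{align*}
The stopping criterion \eqref{eq:choose_L_non_conv} is precisely the statement that $f(x^{k+1}) \leq l_f(x^{k+1};y^{k+1}) + \tfrac{L_{k+1}}{2}\norm{x^{k+1}-y^{k+1}}^2 + \tfrac{\alpha_{k+1}}{2A_{k+1}}\eps$, so the bracketed quantity on the left is at least $f(x^{k+1}) - \tfrac{\alpha_{k+1}}{2A_{k+1}}\eps$; multiplying by $A_{k+1}$ lower-bounds the entire left side by $A_{k+1}f(x^{k+1}) - \tfrac{\alpha_{k+1}}{2}\eps$, which is what produces the $\tfrac{\alpha_{k+1}\eps}{2}$ slack in the target inequality.

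Next I would linearize the two remaining $l_f$ terms. Writing $\nabla^{r_{k+1}} f(y^{k+1}) = \nabla f(y^{k+1}) + \Delta$ with $\Delta := \nabla^{r_{k+1}} f(y^{k+1}) - \nabla f(y^{k+1})$ and invoking convexity in the form $f(y^{k+1}) + \la \nabla f(y^{k+1}), z - y^{k+1}\ra \leq f(z)$, I get $l_f(x^k;y^{k+1}) \leq f(x^k) + \la \Delta,\, x^k - y^{k+1}\ra$ and $l_f(x;y^{k+1}) \leq f(x) + \la \Delta,\, x - y^{k+1}\ra$. Substituting these replaces $A_k l_f(x^k;y^{k+1}) + \alpha_{k+1} l_f(x;y^{k+1})$ by $A_k f(x^k) + \alpha_{k+1} f(x)$ plus the single error term $\la \Delta,\, A_k(x^k - y^{k+1}) + \alpha_{k+1}(x - y^{k+1})\ra$.

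The decisive simplification is the three-point identity $A_k(x^k - y^{k+1}) + \alpha_{k+1}(x - y^{k+1}) = \alpha_{k+1}(x - u^k)$, which follows directly from the definition $y^{k+1} = (\alpha_{k+1}u^k + A_k x^k)/A_{k+1}$ (step~7) and $A_{k+1} = A_k + \alpha_{k+1}$ (step~5): the former gives $A_{k+1}y^{k+1} = \alpha_{k+1}u^k + A_k x^k$, whence $A_k x^k + \alpha_{k+1}x - A_{k+1}y^{k+1} = \alpha_{k+1}(x - u^k)$. The error term therefore collapses to $\alpha_{k+1}\la \Delta,\, x - u^k\ra$, exactly the last term claimed. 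Gathering everything and moving $A_k f(x^k)$ and the telescoping squared-distance terms to the left yields the statement.

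The only genuine subtlety is the bookkeeping around the two gradients: convexity is a property of the true gradient $\nabla f$, whereas the algorithm and Lemma~\ref{lemma_maxmin_3DLST} only ever reference the mini-batch estimate $\nabla^{r_{k+1}} f$, so the whole purpose of this lemma is to \emph{isolate} the discrepancy into the explicit term $\alpha_{k+1}\la \nabla^{r_{k+1}} f(y^{k+1}) - \nabla f(y^{k+1}),\, x - u^k\ra$ rather than try to absorb it. This is deliberate: after summing over $k$ and setting $x = x^*$, this is precisely the term whose sum must later be controlled by a large-deviation argument of the type in Lemma~\ref{Lm:Delta}, where the martingale structure is delicate because the batch sizes $r_{k+1}$, and hence $L_{k+1}$ and $\alpha_{k+1}$, are random. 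I would also take care that the coefficient of $\norm{x^{k+1}-y^{k+1}}^2$ in the stopping criterion is read consistently with the $L_{k+1}/2$ appearing in Lemma~\ref{lemma_maxmin_3DLST}, so that the quadratic term cancels exactly and leaves only the $\tfrac{\alpha_{k+1}\eps}{2}$ slack.
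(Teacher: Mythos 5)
Your proof is correct and is essentially the paper's own argument: chain Lemma~\ref{lemma_maxmin_3DLST} with the line-search exit condition, linearize the two remaining $l_f$ terms via convexity with respect to the true gradient, and collapse the two error inner products into $\alpha_{k+1}\langle \nabla^{r_{k+1}} f(y^{k+1})-\nabla f(y^{k+1}),\, x - u^k\rangle$ using exactly the step-7 identity (the paper states it as $A_k(y^{k+1}-x^k)=\alpha_{k+1}(u^k-y^{k+1})$, you as $A_k(x^k-y^{k+1})+\alpha_{k+1}(x-y^{k+1})=\alpha_{k+1}(x-u^k)$; these are the same relation). Your closing caveat about the quadratic coefficient is precisely what the paper resolves silently: the displayed criterion \eqref{eq:choose_L_non_conv} carries $L_{k+1}\|x^{k+1}-y^{k+1}\|_2^2$, while the appendix proof (like you) reads it with $\tfrac{L_{k+1}}{2}\|x^{k+1}-y^{k+1}\|_2^2$ so that it matches Lemma~\ref{lemma_maxmin_3DLST}.
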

\begin{proof}
\begin{align*}
f(x^{k+1}) &\leqarg{(22) (\text{main part})} l_{f}(x^{k+1};y^{k+1})  + \frac{L_{k+1}}{2}\norm{x^{k+1} - y^{k+1}}^2 + \frac{\alpha_{k+1}}{2A_{k+1}}\varepsilon\\
&\leqarg{Lemma \ref{lemma_maxmin_3DLST}}\frac{A_k}{A_{k+1}}l_{f}(x^k;y^{k+1}) + \frac{\alpha_{k+1}}{A_{k+1}}\Bigl(l_{f}(x;y^{k+1})
	 \\&\hphantom{{}={}}\,\,\,\,\,+ \frac{1}{2\alpha_{k+1}}\norm{x - u^{k}}^2 - \frac{1}{2\alpha_{k+1}}\norm{x - u^{k+1}}^2\Bigl) + \frac{\alpha_{k+1}}{2A_{k+1}}\varepsilon.
\end{align*}
From the last inequality we have
\begin{align*}
 f(x^{k+1}) &\leq \frac{A_k}{A_{k+1}}\left(f(y^{k+1}) + \langle \nabla^{r_{k+1}} f(y^{k+1}), x^k - y^{k+1} \rangle \right) \\&\hphantom{{}={}}+
\frac{\alpha_{k+1}}{A_{k+1}}\Bigl(f(y^{k+1}) + \langle \nabla^{r_{k+1}} f(y^{k+1}), x - y^{k+1} \rangle
	 \\&\hphantom{{}={}}+ \frac{1}{2\alpha_{k+1}}\norm{x - u^{k}}^2 - \frac{1}{2\alpha_{k+1}}\norm{x - u^{k+1}}^2\Bigl) + \frac{\alpha_{k+1}}{2A_{k+1}}\varepsilon \\&= \frac{A_k}{A_{k+1}}\Bigl(f(y^{k+1}) + \langle \nabla f(y^{k+1}), x^k - y^{k+1} \rangle \\&\hphantom{{}={}}+ \langle \nabla^{r_{k+1}} f(y^{k+1})-\nabla f(y^{k+1}), x^k - y^{k+1} \rangle\Bigl) \\&\hphantom{{}={}}+
\frac{\alpha_{k+1}}{A_{k+1}}\Bigl(f(y^{k+1}) + \langle \nabla f(y^{k+1}), x - y^{k+1} \rangle \\&\hphantom{{}={}}+ \langle \nabla^{r_{k+1}} f(y^{k+1})-\nabla f(y^{k+1}), x - y^{k+1} \rangle
	 \\&\hphantom{{}={}}+ \frac{1}{2\alpha_{k+1}}\norm{x - u^{k}}^2 - \frac{1}{2\alpha_{k+1}}\norm{x - u^{k+1}}^2\Bigl) + \frac{\alpha_{k+1}}{2A_{k+1}}\varepsilon \\&\leq_{{\circled{1}}}
	 \frac{A_k}{A_{k+1}}f(x^k) + \frac{\alpha_{k+1}}{A_{k+1}}\Bigl(f(x) + \frac{1}{2\alpha_{k+1}}\norm{x - u^{k}}^2 - \frac{1}{2\alpha_{k+1}}\norm{x - u^{k+1}}^2\Bigl) \\&\hphantom{{}={}}+ \frac{\alpha_{k+1}}{2A_{k+1}}\varepsilon +\frac{\alpha_{k+1}}{A_{k+1}}\Bigl(\langle \nabla^{r_{k+1}} f(y^{k+1})-\nabla f(y^{k+1}), x - y^{k+1} \rangle\Bigl) \\&\hphantom{{}={}}+ \frac{\alpha_{k+1}}{A_{k+1}}\langle \nabla^{r_{k+1}} f(y^{k+1})-\nabla f(y^{k+1}), y^{k+1} - u^k \rangle
	 \\&=
	 	 \frac{A_k}{A_{k+1}}f(x^k) + \frac{\alpha_{k+1}}{A_{k+1}}\left(f(x) + \frac{1}{2\alpha_{k+1}}\norm{x - u^{k}}^2 - \frac{1}{2\alpha_{k+1}}\norm{x - u^{k+1}}^2\right) \\&\hphantom{{}={}}+ \frac{\alpha_{k+1}}{2A_{k+1}}\varepsilon  +\frac{\alpha_{k+1}}{A_{k+1}}\langle \nabla^{r_{k+1}} f(y^{k+1})-\nabla f(y^{k+1}), x - u^{k} \rangle.
\end{align*}

{\small \circled{1}} -- convexity and $A_{k}(y^{k+1} - x^k) = \alpha_{k+1} (u^k - y^{k+1})$ from (step 7).

\end{proof}

\begin{lemma}\label{Lm:Delta_fast}
 Let the sequence $u^0,u^1,\dots, u^N$ and sequence $y^0,y^1,\dots, y^N$ be generated after $N =\Theta\left(\sqrt{\frac{\bar L R^2}{\varepsilon}}\right)$  iterations of Algorithm  3 with the change made in Theorem 3. Then with probability $\geq 1-\alpha$ it holds
\begin{align*}
\sum_{k = 0}^{N-1}\alpha_{k+1}\langle \nabla^{r_{k+1}} f(y^{k+1})-\nabla f(y^{k+1}), x^* - u^k \rangle = O\left(\|x^0-x^*\|^2\right)= O\left(R^2\right).
\end{align*}
\end{lemma}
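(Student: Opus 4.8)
The plan is to mirror, essentially verbatim, the proof of Lemma \ref{Lm:Delta} for the non-accelerated Algorithm \ref{Alg:ASGM}, making the dictionary substitutions $\tfrac{1}{L_{k+1}}\mapsto\alpha_{k+1}$, the gradient base point $x^k\mapsto y^{k+1}$, and the displacement $x^k-x^*\mapsto x^*-u^k$. First I would denote
\[
\Delta_N(\{L_{k+1}\}_{k=0}^{N-1})=\sum_{k=0}^{N-1}\alpha_{k+1}\la \nabla^{r_{k+1}} f(y^{k+1})-\nabla f(y^{k+1}),\,x^*-u^k\ra .
\]
The key structural observation, exactly as in the non-accelerated case, is that once a realization $L_1^i,\dots,L_N^i$ of the adaptively chosen Lipschitz constants is fixed, the quantities $\alpha_{k+1},A_{k+1},y^{k+1}$ and the batch sizes $r_{k+1}$ all become deterministic functions of the past, so the summands form a genuine martingale-difference sequence with conditionally mean-zero stochastic-gradient noise. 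The randomness of $r_{k+1}$, which is precisely the obstruction flagged in the main text, is removed by conditioning on the realization and paying for it through a union bound.

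Next I would reproduce the union-bound plus Azuma--Hoeffding machinery. From the modified steps 2 and 4 of Algorithm \ref{Alg:Acc} the number of distinct Lipschitz trajectories is $|\{L_k\}|\le N^m$ with $m=\log_2(\bar L/\underline L)$, so for any $t\ge 0$
\[
\Prob(\Delta_N\ge t)\le N^m\max_i\Prob\big(\Delta_N(\{L_{k+1}^i\})\ge t\big)\le\exp\!\big(-t^2/C^2+m\ln N\big),
\]
the second inequality being Azuma--Hoeffding applied to each fixed trajectory. Choosing $t=C(\sqrt{\ln\alpha^{-1}}+\sqrt{m\ln N})$ yields, with probability $\ge 1-\alpha$, the bound $\Delta_N\le C(\sqrt{\ln\alpha^{-1}}+\sqrt{m\ln N})$, so everything reduces to estimating the constant $C$.

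The main obstacle is the accelerated analogue of \eqref{eq:R_bound}, namely $\max_k\|u^k-x^*\|_2=O(R)$. I would obtain it by summing the one-step inequality of Lemma \ref{lemma_maxmin_3DLST_2} with $x=x^*$ over $k=0,\dots,N-1$; since $A_0=0$, $u^0=x^0$ and $\sum_k\alpha_{k+1}=A_N$, the left-hand side telescopes and $f(x^N)\ge f(x^*)$ gives
\[
\tfrac12\|x^*-u^N\|_2^2\le\tfrac12\|x^0-x^*\|_2^2+\tfrac{\eps}{2}A_N+\Delta_N .
\]
With $N=\Theta(\sqrt{\bar L R^2/\eps})$ one has $A_N=\Theta(R^2/\eps)$, so $\tfrac{\eps}{2}A_N=O(R^2)$, while $\Delta_N$ (in its partial sums up to any index) is controlled by the same induction as in the argument following \eqref{eq:R_bound}: one posits $\|u^k-x^*\|^2\le A_3R^2$, bounds the conditional variance of each summand by Cauchy--Schwarz together with the batch-size choice, and closes the induction with Lemma~2 of \cite{lan2012validation}. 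This is the genuinely delicate step, because the boundedness of $u^k$ and the concentration of $\Delta_N$ are coupled and must be resolved jointly, with high probability uniformly over all $N^m$ realizations.

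Finally, with $\max_k\|u^k-x^*\|_2=O(R)$ in hand, Cauchy--Schwarz bounds each summand by $c\,\alpha_{k+1}R\,\|\nabla^{r_{k+1}} f(y^{k+1})-\nabla f(y^{k+1})\|_2$, and the sub-Gaussian oracle assumption with $r_{k+1}=\Theta\big(\tfrac{\alpha_k\sigma_0^2\bar L^2}{\eps\underline L^2}(\ln\alpha^{-1}+m\ln N)\big)$ gives a variance proxy of order $\alpha_{k+1}^2R^2\sigma^2/r_{k+1}$ per term. Summing and using $\sum_k\alpha_{k+1}=A_N=\Theta(R^2/\eps)$, and treating $\bar L/\underline L$ as a constant, I expect
\[
C=\Theta\!\left(\frac{R^2}{\sqrt{\ln\alpha^{-1}}+\sqrt{m\ln N}}\right),
\]
exactly as in Lemma \ref{Lm:Delta}, whence $\Delta_N\le C(\sqrt{\ln\alpha^{-1}}+\sqrt{m\ln N})=O(R^2)$, which is the claim.
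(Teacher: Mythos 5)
Your proposal is correct and takes exactly the route the paper intends: the paper's own ``proof'' of Lemma \ref{Lm:Delta_fast} is the single remark that it ``is the same as Lemma \ref{Lm:Delta} (main part)'', and your dictionary substitution ($1/L_{k+1}\mapsto\alpha_{k+1}$, $x^k\mapsto y^{k+1}$, $x^k-x^*\mapsto x^*-u^k$), the union bound over the $N^m$ Lipschitz trajectories, Azuma--Hoeffding per fixed trajectory, and the induction via Lemma~\ref{lemma_maxmin_3DLST_2} and Lemma~2 of \cite{lan2012validation} to get $\max_k\|u^k-x^*\|_2=O(R)$ is precisely that adaptation, worked out in more detail (including the needed facts $A_N=\Theta(R^2/\eps)$ and the resulting $C=\Theta\bigl(R^2/(\sqrt{\ln\alpha^{-1}}+\sqrt{m\ln N})\bigr)$) than the paper itself records.
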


Proof of Lemma \ref{Lm:Delta_fast} is the same as Lemma 2 (main part). Now we are ready to prove Theorem 3.

\begin{proof}[Proof of Theorem 3]

Let us telescope inequality from Lemma \ref{lemma_maxmin_3DLST_2} for $k = 0, ..., N - 1$, 
	\begin{align*}
	&\hphantom{{}={}}A_{N} f(x^N) - A_{0} f(x^0) + \frac{1}{2}\norm{x - u^N}^2 - \frac{1}{2}\norm{x - u^0}^2 \\&\leq (A_N - A_0)f(x) + \sum_{k = 0}^{N-1}\alpha_{k+1}\langle \nabla^{r_{k+1}} f(y^{k+1})-\nabla f(y^{k+1}), x - u^k \rangle + \frac{A_{N}\varepsilon}{2}
	\end{align*}
	In view of $\frac{1}{2}\norm{x - u^N}^2\geq 0,\,\forall x \in \mathbb{R}^n$, we have
	\begin{align*}
		A_{N} f(x^N) - A_N f(x) &\leq \frac{1}{2}\norm{x - u^0}^2 + \sum_{k = 0}^{N-1}\alpha_{k+1}\langle \nabla^{r_{k+1}} f(y^{k+1})-\nabla f(y^{k+1}), x - u^k \rangle + \frac{A_{N}\varepsilon}{2}.
	\end{align*}
	
	With $x = x^*$ we get
	\begin{align*}
	A_{N} f(x^N) - A_N f(x^*)  &\leq R^2 + \sum_{k = 0}^{N-1}\alpha_{k+1}\langle \nabla^{r_{k+1}} f(y^{k+1})-\nabla f(y^{k+1}), x - u^k \rangle + \frac{A_{N}\varepsilon}{2}.
	\end{align*}
Using Lemma \ref{Lm:Delta_fast} we have that with probability $\geq 1-\alpha$ it holds
	\begin{align*}
	A_{N} f(x^N) - A_N f(x^*)  &\leq R^2 + O\left(R^2\right) + \frac{A_{N}\varepsilon}{2}
	\end{align*}
	and
	\begin{align*}
	f(x^N) - f(x^*)  &\leq \frac{R^2}{A_N} + O\left(\frac{R^2}{A_N}\right) + \frac{\varepsilon}{2}.
	\end{align*}
Next we use $\frac{1}{A_N} = O\left(\frac{\bar L}{N^2}\right)$ and $N = \Theta\left(\sqrt{\frac{\bar L R^2}{\varepsilon}}\right)$ in order to get the statement of the theorem. It is remains to show that
$T=\tilde O\left(\frac{\sigma_0^2R^2 \bar L^3}{\varepsilon^2 \underline L^3}\right)$. Indeed, from $A_N = O\left(\frac{N^2}{\underline L}\right)$ we have
\[
T = \sum_{k=0}^{N-1}\tilde O\left(\frac{\alpha_k\sigma^2_0\bar L^2}{\varepsilon\underline  L^2}\right) = \tilde O\left(\frac{A_N \sigma^2_0\bar L^2}{\varepsilon\underline  L^2}\right) = \tilde O\left(\frac{\sigma^2_0 R^2\bar L^3}{\varepsilon^2\underline  L^3}\right).
\]
\end{proof}

\end{document}